\long\def\skipit#1{} 
\def\={\,=\,}
\def\+{\,+\,}
\def\-{\,-\,}
\newcommand{\eu}{{\mathfrak{eu}}}  
\newcommand{\Eu}{{\mathscr{E}}}  
\newcounter{hours}
\newcounter{minutes}
\newcommand{\printtime}{
	\setcounter{hours}{\time/60}%
	\setcounter{minutes}{\time-\value{hours}*60}
	\ifthenelse{\value{hours}<10}{0}{}\thehours:%
	\ifthenelse{\value{minutes}<10}{0}{}\theminutes}
\numberwithin{equation}{section}
\numberwithin{figure}{section}
\numberwithin{table}{section}
\newtheorem{thm}{Theorem}[section]
\newtheorem{cor}[thm]{Corollary}
\newtheorem{lemma}[thm]{Lemma}
\newtheorem{J-com}{JG-comment}[section]
\theoremstyle{definition}
\newtheorem{example}{Example}[section]
\newtheorem{problem}[thm]{Problem}
\newtheorem{conj}[thm]{Conjecture}
\newtheorem{rem}[thm]{Remark}
\def\dsum{\displaystyle\sum}
\keywords{ partial dual,  partial Petrial, even-interpolating, interpolating}
\begin{document}

\title{ Parallel edges in ribbon graphs and interpolating behavior of partial-duality polynomials}
\thanks{This work is supported by the NNSFC (Grant No. 11471106) and  the JSSCRC( Grant No. 2021530). }
\date{\today\copyright}
\author{Qiyao Chen}
\address{College of Mathematics, Hunan University, 410082 Changsha, China}
\email{chen1812020@163.com}
\author{Yichao Chen}
\address{College of Mathematics, Hunan University, 410082 Changsha, China}
\email{chengraph@163.com}
\maketitle

\begin{abstract}
\textwidth=114.3mm
{ Recently, Gross, Mansour and Tucker introduced the partial-twuality polynomials. 
In this paper, we find that when there are enough parallel edges,
any multiple graph  is a negative answer to the problem 8.7 in their paper [European J. Combin. 95 (2021),
103329]: Is the restricted-orientable partial-Petrial polynomial of an arbitrary ribbon graph  even-interpolating$?$  In addition, we also find a counterexample to the conjecture 8.1 of Gross, Mansour and Tucker:
 If the partial-dual genus polynomial is neither an odd nor an even polynomial, then it is interpolating.}
\end{abstract}

\section{Introduction}

 We assume that the readers are familiar with the basic knowledge of topological graph theory.
 The reader is referred to \cite{GMT21a} for the explanation of all terms not defined here.  

 Let  $G^{*|_A}$  ( $G^{\times|_A}$ ) be the \textit{partial dual} (\textit{partial Petrial}) of $G$ with respect to $A \subset E(G)$. Denote by $v(G)$, $e(G)$, $f(G)$ and $c(G)$  the number of \textit{vertices, edges, faces} and \textit{connected components} of $G,$ respectively.  For $\bullet \in\{\times,*, \times*, *\times, *\times*\}$, Gross, Mansour, and Tucker \cite{GMT21a} introduced the \textit{ partial-$\bullet$ polynomial} for the ribbon graph $G$, i.e.,
 $$~^{\partial}{\Eu}^{\bullet}_{G}(z)=\sum\limits_{A\subseteq E(G)}z^{\eu[G^{\bullet|_A}]},$$
where $\eu[G^{\bullet|_A}]$ represents the Euler-genus of $G^{\bullet|_A}$.

They also introduced \textit{the restricted orientable partial-$\times$ polynomial}  of $G$ by enumerating Euler-genus only over edge-subsets $A\subseteq E(G)$
such that $G^{\times|_{A}}$ is orientable. We recall that \textit{partial duality} was introduced by Chmutov in \cite{Chm09}.  In \cite{EM12},  Ellis-Monaghan and Moffatt  extended the partial-duality to include the Wilson dual, the Petrie dual, and the two kinds of triality operators. In \cite{AE19}, Abrams and Ellis-Monaghan  called the five operators twualities. We may refer the
reader to \cite{ EM13, GMT20, GMT21a, GMT21b} for more background about partial-duality and partial Petrial.

{ A \textit{subdivision} of  $G$ is obtained by replacing an  edge $e=uv$ of $G$ by a path $uwv,$ and  a \textit{proper edge} is  an edge with two  different ends.
 The \textit{contraction} on edge $e$ is denoted  by $G/e$, and we denote by $G-e$ the ribbon graph obtained from
$G$ by deleting the ribbon $e$.
  In \cite{GMT20}, Gross, Mantour, and Tucker proved a subdivided edge recursion for partial-$*$.}
\begin{thm} \cite{GMT20}\label{GMT20}
Given a ribbon graph $G$ and a ribbon $e$. Let $K$ be a subdivision of $G$,
then\begin{equation}\label{1027}
~^{\partial}{\Eu}^{*}_{K}(z)=
\begin{cases}
 2~^{\partial}{\Eu}^{*}_{G}(z),&\text{if $e$ is a cut ribbon,  }\\
  ~^{\partial}{\Eu}^{*}_{G}(z)+2z^{2}~^{\partial}{\Eu}^{*}_{G-e}(z),&\text{if $e$ is non-separating.}\\
\end{cases}
\end{equation}
\end{thm}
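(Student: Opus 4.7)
The plan is to split $E(K)$ as $(E(G)\setminus\{e\})\sqcup\{e_1,e_2\}$ (with $e_1=uw$ and $e_2=wv$) and group subsets $A\subseteq E(K)$ by $A\cap\{e_1,e_2\}$. Letting $B$ range over subsets of $E(G)\setminus\{e\}$, this yields
\[
~^\partial\Eu^*_K(z) \= \sum_B \Bigl( z^{\eu[K^{*|_B}]} \+ z^{\eu[K^{*|_{B\cup\{e_1\}}}]} \+ z^{\eu[K^{*|_{B\cup\{e_2\}}}]} \+ z^{\eu[K^{*|_{B\cup\{e_1,e_2\}}}]} \Bigr),
\]
and the theorem reduces to identifying, for each $B$, the four Euler-genera in the bracket with quantities coming from $G$ and $G-e$.

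Two identities hold regardless of whether $e$ separates. When $A\cap\{e_1,e_2\}=\emptyset$, the ribbon subgraph $K[B]$ is $G[B]$ with the isolated vertex disc $w$ added, and $K^{*|_B}$ is a ribbon-graph subdivision of $G^{*|_B}$ at the ribbon labelled $e$; since subdivision preserves Euler-genus, $\eu[K^{*|_B}]\=\eu[G^{*|_B}]$. When $A\supseteq\{e_1,e_2\}$, the $2$-path $uwv$ in $K[B\cup\{e_1,e_2\}]$ produces the same face boundaries as the single ribbon $e$ in $G[B\cup\{e\}]$, so the partial dual $K^{*|_{B\cup\{e_1,e_2\}}}$ differs from $G^{*|_{B\cup\{e\}}}$ only by the addition of one parallel ribbon bounding a bigon face, which again preserves Euler-genus; hence $\eu[K^{*|_{B\cup\{e_1,e_2\}}}]\=\eu[G^{*|_{B\cup\{e\}}}]$. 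The $e_1\!\leftrightarrow\!e_2$ symmetry of $K$ (together with the $u\!\leftrightarrow\!v$ relabeling) also gives $\eu[K^{*|_{B\cup\{e_1\}}}]\=\eu[K^{*|_{B\cup\{e_2\}}}]$, so it remains to analyse a single Euler-genus $\eu[K^{*|_{B\cup\{e_1\}}}]$.

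Here the dichotomy enters. If $e$ is a cut ribbon in $G$, then $e_1$ is a pendant bridge in $K[B\cup\{e_1\}]$ and partial-dualing a bridge preserves Euler-genus, giving $\eu[K^{*|_{B\cup\{e_1\}}}]\=\eu[G^{*|_B}]$; the same preservation property applied to $e$ itself also yields $\eu[G^{*|_{B\cup\{e\}}}]\=\eu[G^{*|_B}]$. All four bracket terms thus equal $z^{\eu[G^{*|_B}]}$, and summing over $B$ delivers $4\sum_B z^{\eu[G^{*|_B}]}\=2\,{}^\partial\Eu^*_G(z)$. If instead $e$ is non-separating, then $u$ and $v$ lie in the same component of $G-e$, and a careful boundary trace of $K[B\cup\{e_1\}]$ shows that the two dual arcs of $e_1$ and the two attaching arcs of $e_2$ land on a common new vertex of $K^{*|_{B\cup\{e_1\}}}$ in the \emph{interleaved} cyclic order $\alpha_1\beta_1\alpha_2\beta_2$. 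This interleaving contributes precisely one additional orientable handle relative to $(G-e)^{*|_B}$, so $\eu[K^{*|_{B\cup\{e_1\}}}]\=\eu[(G-e)^{*|_B}]\+2$. Substituting and summing over $B$ reproduces $^\partial\Eu^*_G(z)\+2z^2\,^\partial\Eu^*_{G-e}(z)$.

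The main obstacle is the ``$+2$'' identity in the non-separating case: one must verify that partial-dualing only one half of a subdivided non-separating edge genuinely adds an orientable handle -- equivalently, that the four arcs above interleave rather than nest on the new vertex. The separating/non-separating dichotomy enters precisely at this point, and the bookkeeping of the boundary walk around the pendant vertex $w$ is where the proof demands the most care.
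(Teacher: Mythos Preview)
The paper does not prove this theorem; it is quoted from \cite{GMT20} and used as a tool. So there is no ``paper's own proof'' to compare against. I can, however, assess your argument on its merits and contrast it with the technique the paper \emph{does} use for the companion result, Theorem~\ref{1011}.

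Your four-way split over $A\cap\{e_1,e_2\}$ is the right organising idea, and all four Euler-genus identities you assert are in fact true. Two of the justifications, though, do not hold up as written.

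\textbf{The symmetry step.} You claim $\eu[K^{*|_{B\cup\{e_1\}}}]=\eu[K^{*|_{B\cup\{e_2\}}}]$ via ``the $e_1\!\leftrightarrow\!e_2$ symmetry of $K$ (together with the $u\!\leftrightarrow\!v$ relabeling)''. There is no such automorphism of $K$ unless $G$ itself happens to have a symmetry swapping $u$ and $v$. The equality is true, but for a different reason: in each of $K[B\cup\{e_1\}]$ and $K[B\cup\{e_2\}]$ the vertex $w$ is pendant, so both spanning subgraphs have $f_G(B)$ faces; the complements likewise both have $f_G(E(G)\setminus\{e\}\setminus B)$ faces. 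Moffatt's formula (Theorem~\ref{d}) then gives equality immediately.

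\textbf{The ``$+2$'' step.} In the cut-ribbon case you write ``$e_1$ is a pendant bridge in $K[B\cup\{e_1\}]$''; that is always true regardless of whether $e$ separates, so it cannot be the operative hypothesis. What you actually need is that $e_1$ is a bridge \emph{in $K$ itself}, which does follow from $e$ being a cut ribbon of $G$. More seriously, in the non-separating case you reduce everything to an interleaving claim about four arcs on a new vertex, acknowledge this is ``the main obstacle'', and then do not carry it out. That is the entire content of the theorem in this case.

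Both gaps disappear if you adopt the method the paper uses to prove the dual statement, Theorem~\ref{1011}: compute each Euler-genus directly from $\eu(K^{*|_A})=2c(K)+e(K)-f_K(A)-f_K(A^c)$. With $B'=E(G)\setminus\{e\}\setminus B$ one finds $f_K(B\cup\{e_i\})=f_G(B)$ and $f_K(B'\cup\{e_j\})=f_G(B')$ for $i\neq j$, whence
\[
\eu\bigl(K^{*|_{B\cup\{e_1\}}}\bigr)=2c(G)+e(G)+1-f_G(B)-f_G(B'),
\]
and comparing with $\eu\bigl((G-e)^{*|_B}\bigr)=2c(G-e)+e(G)-1-f_G(B)-f_G(B')$ gives the ``$+2$'' precisely when $c(G-e)=c(G)$, i.e.\ when $e$ is non-separating. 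No boundary trace is needed.
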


Gross, Mantour, and Tucker also give  subdivided edges and parallel  edges recursions for partial-$\times$ in \cite{GMT21a}.

\begin{thm}\cite{GMT21a}
Let $G$ be a ribbon graph with an e-type pp ribbon $e$, and let  $G+e'$ be the ribbon graph   obtained  by adding a ribbon $e'$ parallel to a ribbon $e$ in $G$.  Thus
\begin{eqnarray}
~^{\partial}{\Eu}^{\times}_{G+e'}(z) &=&(1+2z) ~^{\partial}{\Eu}^{\times}_{G/e}(z)+(z^{2})~^{\partial}{\Eu}^{\times}_{G-e}(z)
\end{eqnarray}
\end{thm}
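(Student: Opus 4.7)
The plan is to split the defining sum of $~^{\partial}{\Eu}^{\times}_{G+e'}(z)$ according to which of the two parallel ribbons $e,e'$ lie in the partial-Petrial edge-subset. Since $E(G+e')=E(G)\cup\{e'\}$, every $A'\subseteq E(G+e')$ decomposes uniquely as $A'=B\cup S$ with $B\subseteq E(G-e)$ and $S\subseteq\{e,e'\}$, so that
\begin{equation*}
~^{\partial}{\Eu}^{\times}_{G+e'}(z)
\=\sum_{B\subseteq E(G-e)}\sum_{S\subseteq\{e,e'\}}
z^{\eu[(G+e')^{\times|_{B\cup S}}]}.
\end{equation*}
Fixing $B$, I would then handle the four sub-cases $S=\emptyset,\{e\},\{e'\},\{e,e'\}$ in turn.

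The core step is to match each Euler-genus value with either $\eu[(G/e)^{\times|_{B}}]$ or $\eu[(G-e)^{\times|_{B}}]$, shifted by a constant that does not depend on $B$. Because partial-Petriality acts edge-locally and the hypothesis that $e$ is an $e$-type pp ribbon pins down the local configuration of the parallel pair $\{e,e'\}$ in $G+e'$, the effect of each choice of $S$ can be read off once and for all. I expect the following identities:
\begin{align*}
\eu[(G+e')^{\times|_{B}}]&\=\eu[(G/e)^{\times|_{B}}],\\
\eu[(G+e')^{\times|_{B\cup\{e\}}}]&\=\eu[(G/e)^{\times|_{B}}]\+1,\\
\eu[(G+e')^{\times|_{B\cup\{e'\}}}]&\=\eu[(G/e)^{\times|_{B}}]\+1,\\
\eu[(G+e')^{\times|_{B\cup\{e,e'\}}}]&\=\eu[(G-e)^{\times|_{B}}]\+2.
\end{align*}
Summing $z$-powers over $B$ and collecting gives the $(1+2z)$-multiple of $~^{\partial}{\Eu}^{\times}_{G/e}(z)$ together with the $z^{2}$-multiple of $~^{\partial}{\Eu}^{\times}_{G-e}(z)$, which is the claimed recursion.

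The main obstacle is the rigorous verification of these four local identities. The case $S=\emptyset$ should follow from the observation that two untwisted parallel ribbons bound a digon face, so that both deleting $e'$ and contracting $e$ preserve the Euler-genus. The mixed cases $S=\{e\}$ and $S=\{e'\}$ require showing that twisting exactly one ribbon of the parallel pair introduces a crosscap and hence raises the Euler-genus by $1$; I would track this through the boundary-component count in the arrow-presentation model used in \cite{GMT21a}, exploiting the symmetry between $e$ and $e'$ to reduce to a single calculation. The most delicate case is $S=\{e,e'\}$, where both ribbons become twisted and together form a handle contributing $2$ to the Euler-genus; here one must also verify that deletion rather than contraction is the correct reduction of $e$, which is what forces $G-e$ (instead of $G/e$) to appear on the right-hand side.
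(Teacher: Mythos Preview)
The paper does not contain a proof of this theorem: it is quoted verbatim from \cite{GMT21a} as background (note the citation immediately after \verb|\begin{thm}|) and is never argued for in the present paper. Hence there is no ``paper's own proof'' to compare your attempt against.

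That said, your strategy---decomposing every $A'\subseteq E(G+e')$ as $B\cup S$ with $B\subseteq E(G-e)=E(G/e)$ and $S\subseteq\{e,e'\}$, and then matching each of the four choices of $S$ to a fixed Euler-genus shift over $(G/e)^{\times|_B}$ or $(G-e)^{\times|_B}$---is exactly the natural one, and it is the method used throughout \cite{GMT20,GMT21a} for all of their parallel/subdivision recursions (compare the proof of Theorem~\ref{1011} in this very paper, which carries out the analogous four-case split for partial-$*$). Your four claimed identities are the right ones; the ``e-type~pp'' hypothesis is precisely what pins down the local boundary-walk picture so that the untwisted pair bounds a digon ($\Delta\eu=0$), a single twist in the pair produces one crosscap ($\Delta\eu=1$), and the doubly-twisted pair contributes a handle over $G-e$ ($\Delta\eu=2$). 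The only real work, as you note, is the face-count verification in the last case, which in \cite{GMT21a} is done via their monodromy/arrow-presentation tables rather than ad~hoc.
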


\begin{thm}\cite{GMT21a}\label{sub}
Let $G$ be a ribbon graph with a ribbon $e$, and let  $H$ be  obtained  from $G $ by subdividing  $e$ into edges $e_{1}$ and $e_{2}$. Then
\begin{eqnarray}
~^{\partial}{\Eu}^{\times}_{H}(z) &=&2~ ~^{\partial}{\Eu}^{\times}_{G}(z).\label{sub0}
\end{eqnarray}
\end{thm}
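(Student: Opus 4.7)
The plan is to exhibit a $2$-to-$1$ correspondence between the edge-subsets $A \subseteq E(H)$ and the edge-subsets $A' \subseteq E(G)$ under which $\eu[H^{\times|_A}] = \eu[G^{\times|_{A'}}]$; regrouping the defining sum for $\pdep^{\times}_{H}(z)$ then immediately yields the factor $2$. Writing $E(H) = (E(G) \setminus \{e\}) \cup \{e_{1}, e_{2}\}$, every $A \subseteq E(H)$ splits uniquely as $A = A'' \cup B$ with $A'' \subseteq E(G) \setminus \{e\}$ and $B \subseteq \{e_{1}, e_{2}\}$. I will argue that for each fixed $A''$, the two choices $B \in \{\emptyset, \{e_{1}, e_{2}\}\}$ both produce a ribbon graph of Euler genus $\eu[G^{\times|_{A''}}]$, while the two choices $B \in \{\{e_{1}\}, \{e_{2}\}\}$ both produce one of Euler genus $\eu[G^{\times|_{A'' \cup \{e\}}}]$; summing first over $B$ and then over $A''$ then gives the identity.

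The key tool is a smoothing lemma: at a degree-$2$ vertex $w$ of a ribbon graph incident to two distinct ribbons $f_{1}, f_{2}$, the operation that deletes $w$ and merges $f_{1}, f_{2}$ into a single ribbon leaves the number of connected components, the quantity $-v + e$, and the number of boundary components (faces) unchanged, so it preserves the Euler genus; moreover the merged ribbon is twisted precisely when exactly one of $f_{1}, f_{2}$ is twisted. I would prove this by directly tracing boundary walks around $w$ and verifying the Euler-characteristic count. Applying this smoothing to the subdivision vertex $w$ of $H^{\times|_A}$ reduces the comparison to a purely local twist computation: since partial Petrial toggles the twist of each selected edge, and since under the standard subdivision convention the original twists of $e_{1}$ and $e_{2}$ in $H$ combine to the twist of $e$ in $G$, the merged ribbon carries twist $t_{G}(e) \oplus [e_{1} \in A] \oplus [e_{2} \in A]$, which matches the twist of $e$ in $G^{\times|_{A'}}$ exactly when $[e \in A'] = [e_{1} \in A] \oplus [e_{2} \in A]$. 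This forces $A' = A''$ when $|B| \in \{0, 2\}$ and $A' = A'' \cup \{e\}$ when $|B| = 1$, delivering the required $2$-to-$1$ matching.

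I expect the main obstacle to be bookkeeping rather than substance: one must fix an unambiguous convention for the twist states of $e_{1}$ and $e_{2}$ in $H$ and verify that, away from a small neighbourhood of $w$, the ribbon graph $H^{\times|_A}$ literally coincides with $G^{\times|_{A'}}$, so that smoothing at $w$ yields $G^{\times|_{A'}}$ on the nose rather than only up to homeomorphism. Once this local identification is secured, the equality $\eu[H^{\times|_A}] = \eu[G^{\times|_{A'}}]$ is immediate from the smoothing lemma, and summing over the four choices of $B$ for each $A''$ produces $\pdep^{\times}_{H}(z) = 2\,\pdep^{\times}_{G}(z)$, completing the argument.
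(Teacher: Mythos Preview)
The paper does not actually prove this theorem; it is quoted verbatim from \cite{GMT21a} and used as a black box, so there is no in-paper argument to compare against. Your proposal stands on its own and is correct: partitioning $A\subseteq E(H)$ according to $B=A\cap\{e_{1},e_{2}\}$ and invoking the degree-$2$ smoothing observation (which preserves $v-e$, $f$, $c$, and hence $\eu$, while the resulting ribbon is twisted iff an odd number of $e_{1},e_{2}$ were twisted) gives exactly the $2$-to-$1$ matching $A\mapsto A'$ with $\eu[H^{\times|_A}]=\eu[G^{\times|_{A'}}]$, and the factor $2$ follows. The only caveat is the one you already flag: fix once and for all the convention that $e_{1}$ is untwisted and $e_{2}$ carries the original twist of $e$ (or any equivalent choice), so that the XOR bookkeeping is unambiguous; after that the identification of $H^{\times|_A}$ with a subdivision of $G^{\times|_{A'}}$ is literal and the argument goes through without further subtlety.
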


 The \textit{support} of a polynomial $f(z)=\dsum_{i=0}^{n}a_iz^i$ is the set $\{i|a_i\neq 0\}$.  If $supp(f(z) )$ is an \textit{integer interval} $[m, n]$
of all  integers from $m$ to $n$, inclusive, we call $f(z)$  an \textit{interpolating polynomial. }The \textit{size} of integer interval $[m, n]$ is the number of elements of $[m, n].$
If $supp(f(z) )$ is the set of all even natural numbers in an integer interval, then $f(z)$ is  \textit{even-interpolating.} If the terms of the non-zero coefficient of the polynomial are even (odd) degree, we call it  an
 \textit{even (odd)  polynomial}.

  Conjecture 8.1 and Problem 8.7 in their paper \cite{GMT21a} state  that

\begin{conj}  If the partial-$*$ polynomial $ ~^{\partial}{\Eu}^{*}_{G}(z)$ is neither an odd nor an even polynomial, then it is interpolating.
\end{conj}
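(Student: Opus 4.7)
The plan is to \emph{disprove} rather than prove the conjecture, since the abstract announces that Conjecture 8.1 fails; thus the task is to design and verify an explicit ribbon graph $G$ whose partial-$*$ polynomial $~^{\partial}{\Eu}^{*}_{G}(z)$ has mixed parity (a nonzero coefficient at both an even and an odd degree) yet whose support is not an integer interval. The central tool will be Theorem \ref{GMT20}, specifically the non-separating case of the subdivided-edge recursion,
$$~^{\partial}{\Eu}^{*}_{K}(z) = ~^{\partial}{\Eu}^{*}_{G}(z) + 2z^{2}\, ~^{\partial}{\Eu}^{*}_{G-e}(z),$$
whose right-hand side is an ``anchor'' polynomial plus a shift-by-two of a second polynomial. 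This is exactly the structural shape needed to manufacture a gap in the support while preserving mixed parity.

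Concretely, I would look for a small base ribbon graph $G$ containing a non-separating ribbon $e$ with two properties. First, $~^{\partial}{\Eu}^{*}_{G}(z)$ should have support equal to (or containing) $\{0,1\}$, thereby contributing low-degree terms of both parities to the subdivision $K$; this can be arranged by taking $G$ to consist of a trivial loop together with a twisted loop or a similarly small mixed-genus configuration. Second, $~^{\partial}{\Eu}^{*}_{G-e}(z)$ should have its lowest nonzero degree at some $m\geq 2$, so that after multiplication by $2z^{2}$ its contribution lands only at degrees $\geq 4$. The support of $~^{\partial}{\Eu}^{*}_{K}(z)$ is then forced into $\{0,1\}\cup\{4,5,\ldots\}$, omitting degrees $2$ and $3$ and producing the desired counterexample.

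The main technical obstacle is the second step: forcing $~^{\partial}{\Eu}^{*}_{G-e}(z)$ to start at degree $\geq 2$ is a \emph{uniform} lower-bound statement, requiring that \emph{every} partial dual $(G-e)^{*|_A}$ has Euler-genus at least $2$. This is not automatic, and typically demands either a structural invariant of $G-e$ that lower-bounds the Euler-genus across the entire Boolean lattice of edge-subsets, or a direct enumeration over all $2^{e(G-e)}$ subsets verifiable by computer. A pragmatic parallel approach is to enumerate ribbon graphs of small edge-count, compute $~^{\partial}{\Eu}^{*}_{G}(z)$ by summing over $A\subseteq E(G)$, and scan for the first example exhibiting mixed parity together with a gap; this brute-force search both confirms the counterexample and guides the hand-written construction above by indicating which small $G-e$ has sufficiently high minimum Euler-genus across all of its partial duals.
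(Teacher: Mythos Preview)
Your main construction cannot work as stated. You ask for a non-separating ribbon $e$ in $G$ with $supp\big(~^{\partial}{\Eu}^{*}_{G}\big)=\{0,1\}$ while every partial dual of $G-e$ has Euler-genus at least $2$. A short computation with Theorem~\ref{d} rules this out: for $e$ non-separating and $A\subseteq E(G)$ with $e\notin A$ one has $c(G-e)=c(G)$, $f_{G}(A)=f_{G-e}(A)$, and $f_{G}(A^{c})=f_{G-e}(A^{c}\setminus\{e\})\pm 1$, hence
\[
\eu\big(G^{*|_A}\big)\ \in\ \Big\{\,\eu\big((G-e)^{*|_A}\big),\ \eu\big((G-e)^{*|_A}\big)+2\,\Big\}.
\]
Combined with $\eu(G^{*|_A})=\eu(G^{*|_{A^{c}}})$ this gives $\min_{A}\eu(G^{*|_A})\geq \min_{B}\eu\big((G-e)^{*|_B}\big)$. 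Thus a degree-$0$ term in $~^{\partial}{\Eu}^{*}_{G}$ forces a degree-$0$ term in $~^{\partial}{\Eu}^{*}_{G-e}$, and hence a degree-$2$ term in $~^{\partial}{\Eu}^{*}_{K}$; the hoped-for gap at degree~$2$ is impossible. The ``main technical obstacle'' you flag is not merely technical---it is an outright obstruction, and no amount of searching will produce the pair $(G,G-e)$ you describe.

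What the paper actually does is much closer to your fallback than to your primary plan. It exhibits, by a direct case analysis over all $A\subseteq E(G)$, a specific $7$-edge ribbon graph $G$ (two $3$-cycles linked by a twisted edge) with
\[
~^{\partial}{\Eu}^{*}_{G}(z)=8z^{2}+48z^{4}+32z^{5}+40z^{6},
\]
whose support $\{2,4,5,6\}$ is already mixed-parity and non-interpolating; so $G$ alone refutes the conjecture. To promote this to an infinite family the paper uses not the subdivision recursion but its dual, the parallel-edge recursion of Theorem~\ref{1011} and Corollary~\ref{dmt}: since $~^{\partial}{\Eu}^{*}_{G/e_{1}}(z)=8z^{2}+32z^{4}+16z^{5}+8z^{6}$ also skips degree~$3$, so does $z^{2}\cdot~^{\partial}{\Eu}^{*}_{G/e_{1}}(z)$, and the sum retains the gap at~$3$ for every $n\geq 1$. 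The conceptual point you missed is that the gap is not created by pushing the two summands of the recursion into disjoint degree ranges; rather, one chooses the base graph so that \emph{both} summands already omit the same degree.
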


\begin{problem}  Is the restricted-orientable partial-$\times$ polynomial of an arbitrary ribbon graph $G$ even-interpolating$?$
\end{problem}

 In this paper, we apply the operations of adding {parallel edges}  and {subdividing  edges} to get some counter-examples of  Conjecture 8.1 and Problem 1.2 \cite{GMT21a}. We first disprove Conjecture 1.1 by finding an infinite family of ribbon graphs as counterexamples. Then
we answer the Problem 1.2 by proving the restricted orientable partial polynomial of any multiple ribbon graph with enough  parallel edges ( we also have a tight lower bound for this) are not even-interpolating.  

\begin{rem} Throughout the paper, we will use the rotation projection \cite{GT87} of $G$ instead of the ribbon graph $G$ itself (See Figure \ref{flm} - Figure \ref{f111}).
\end{rem}
\section{The parallel edge recursion for partial- $*$
polynomials  }

In this section,  we  derive a recursion for the partial-$*$
polynomials.
 Given two disjoint ribbon graphs $G_{1}$ and $G_{2}$, we let $G_{1}\vee G_{2}$ denote
the \textit{join} of $G_{1}$ and $G_{2}$.    The \textit{complement} of $A$ in $E(G)$ is  $A^{c}=E(G)-A$.

\begin{thm}\cite{Mof12}\label{d}
Let $G$ be a ribbon graph and $A\subseteq E(G)$, then
\begin{eqnarray}\label{d0}
\eu(G^{*|_{ A}})&=&2c(G)+e(G)-f(A)-f(A^{c}).
\end{eqnarray}

\end{thm}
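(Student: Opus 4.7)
The plan is to derive the formula from Euler's formula applied to the partial dual, identifying each contributing count in terms of $G$ and $A$. Recalling that $\eu(H) = 2c(H) - v(H) + e(H) - f(H)$ for any ribbon graph $H$, the strategy is to evaluate each of $c(G^{*|_A})$, $v(G^{*|_A})$, $e(G^{*|_A})$, and $f(G^{*|_A})$ in terms of $G$ and $A$ and substitute.

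Two of the four identifications are essentially immediate from the definition of the partial dual. Partial duality only modifies the local structure at the bands lying in $A$, so no edges are created or destroyed and $e(G^{*|_A}) = e(G)$. For the same reason each connected component of $G$ remains a single connected component of the partial dual, so $c(G^{*|_A}) = c(G)$.

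The main step is the vertex identification $v(G^{*|_A}) = f(A)$, where $f(A)$ denotes the number of boundary components of the spanning ribbon subgraph of $G$ on edge set $A$. I would argue this directly from the band-decomposition description of the partial dual: to build $G^{*|_A}$, one forms the spanning ribbon subgraph on $A$, caps off each of its boundary components with a disc to produce the new vertex set, and then reattaches the bands of $A^c$ along their original attaching arcs. By this construction, the vertices of $G^{*|_A}$ are in natural bijection with the boundary components of the spanning ribbon subgraph on $A$, giving $v(G^{*|_A}) = f(A)$.

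For the remaining identification $f(G^{*|_A}) = f(A^c)$, I would invoke the composition rule $(G^{*|_A})^{*|_B} = G^{*|_{A \triangle B}}$ with $B = E(G)$ to obtain $(G^{*|_A})^{*} = G^{*|_{A^c}}$. Since the ordinary Euler--Poincar\'e dual swaps vertex and face counts,
$$f(G^{*|_A}) = v\bigl((G^{*|_A})^{*}\bigr) = v(G^{*|_{A^c}}),$$
and the vertex identity already established, applied to the subset $A^c$, gives $v(G^{*|_{A^c}}) = f(A^c)$. Substituting the four identifications into Euler's formula then yields the stated equality. The step I expect to need the most care is the vertex identification $v(G^{*|_A}) = f(A)$: it is almost tautological in the arrow-presentation or signed rotation system formalism for ribbon graphs, but in the purely topological band-decomposition picture it requires a careful tracing of boundary arcs across the bands of $A^c$ to confirm that capping the boundary components of the $A$-subgraph followed by reattachment produces precisely the ribbon graph $G^{*|_A}$.
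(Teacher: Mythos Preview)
The paper does not give its own proof of this statement: Theorem~\ref{d} is quoted directly from \cite{Mof12} and used as a black box in the proof of Theorem~\ref{1011}. So there is no in-paper argument to compare against.

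That said, your proposed proof is correct and is essentially the standard derivation. The four identifications you list are exactly the ones underlying Moffatt's formula: $e(G^{*|_A})=e(G)$ and $c(G^{*|_A})=c(G)$ are immediate, $v(G^{*|_A})=f(A)$ is the defining feature of the partial dual in the boundary-component/arrow-presentation description, and your route to $f(G^{*|_A})=f(A^c)$ via $(G^{*|_A})^{*}=G^{*|_{A^c}}$ together with the vertex identity applied to $A^c$ is clean and avoids a second boundary-tracing argument. Substituting into $\eu(H)=2c(H)-v(H)+e(H)-f(H)$ gives the result. Your caveat about the vertex identification is well placed: in the band-decomposition picture one does need to check that the reattached $A^c$-bands meet the new vertex discs in disjoint arcs, but this is routine once the construction is spelled out, and in the arrow-presentation formalism it is immediate.
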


\begin{thm}\label{1011}
Let $G$ be a ribbon graph with a  proper ribbon $e$,  let $G+e_{1}$ be the ribbon graph obtained by adding  parallel ribbon $e_1$ to the ribbon $e$  as shown in Figure \ref{flm}. Then
\begin{eqnarray}\label{dm0}
~^{\partial}{\Eu}^{*}_{G+ e_{1}}(z)&=&~^{\partial}{\Eu}^{*}_{G}(z)
+2z^{2}~^{\partial}{\Eu}^{*}_{G/ e}(z).
\end{eqnarray}
\end{thm}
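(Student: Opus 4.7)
The plan is to expand the defining sum
$$~^{\partial}{\Eu}^{*}_{G+e_{1}}(z) = \sum_{A\subseteq E(G+e_{1})} z^{\eu[(G+e_{1})^{*|_{A}}]}$$
and, at each term, apply Moffatt's formula (Theorem \ref{d}) to rewrite
$$\eu[(G+e_{1})^{*|_{A}}] = 2c(G)+e(G)+1-f_{G+e_{1}}(A)-f_{G+e_{1}}(A^{c}),$$
using $c(G+e_{1})=c(G)$ (the endpoints of $e$ are already connected in $G$) and $e(G+e_{1})=e(G)+1$. I will then split the sum into four cases indexed by the intersection $A\cap\{e,e_{1}\}$: (A) neither edge in $A$, (B) both edges in $A$, (C) only $e\in A$, (D) only $e_{1}\in A$.

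The technical heart is a face-count comparison between ribbon subgraphs of $G+e_{1}$ and of $G$. For $T\subseteq E(G)\setminus\{e\}$ I plan to verify two observations directly from the rotation projection shown in Figure \ref{flm}: (i) if $e\in T$, then placing $e_{1}$ alongside $e$ creates a digon face bounded by $e$ and $e_{1}$, so $f_{G+e_{1}}(T\cup\{e_{1}\})=f_{G}(T)+1$; (ii) if $e\notin T$, then $e_{1}$ simply occupies the attachment slots of $e$, producing a ribbon graph isomorphic to $G[T\cup\{e\}]$, so $f_{G+e_{1}}(T\cup\{e_{1}\})=f_{G}(T\cup\{e\})$.

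Granting these, Cases (A) and (B) reindex via $S=A\cap E(G)$ to
$$\sum_{S\subseteq E(G)}z^{\eu[G^{*|_{S}}]}=~^{\partial}{\Eu}^{*}_{G}(z),$$
the $+1$ from observation (i) in Case (B) cancelling the extra $+1$ contributed by $e(G+e_{1})=e(G)+1$. Cases (C) and (D) are symmetric and each produce terms of the form
$$z^{\,2c(G)+e(G)+1-f_{G}(T\cup\{e\})-f_{G}(E(G)\setminus T)},\qquad T\subseteq E(G)\setminus\{e\}.$$
Since $e$ is a proper ribbon, the standard fact that contraction of a non-loop ribbon preserves face counts yields $f_{G}(T\cup\{e\})=f_{G/e}(T)$ and $f_{G}(E(G)\setminus T)=f_{G/e}(E(G/e)\setminus T)$; combined with $e(G/e)=e(G)-1$ and $c(G/e)=c(G)$, Moffatt's formula applied to $G/e$ makes each such exponent equal to $\eu[(G/e)^{*|_{T}}]+2$. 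Cases (C) and (D) therefore contribute $z^{2}~^{\partial}{\Eu}^{*}_{G/e}(z)$ apiece, for a combined $2z^{2}~^{\partial}{\Eu}^{*}_{G/e}(z)$, matching (\ref{dm0}).

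The main obstacle is verifying observation (i): one must confirm that the local insertion of $e_{1}$ in the rotation around $u$ and $v$ really introduces a new digon face, rather than merging two pre-existing faces. A careful trace of the boundary walks at the four corners at $u$ and $v$ adjacent to $e$, read off the rotation projection in Figure \ref{flm}, settles this, and this is the only step that relies on the precise \emph{e-type} nature of the parallel configuration; equivalently, one can read it off the geometric dual, where adding an e-type parallel ribbon corresponds to subdividing the dual edge of $e$, adding exactly one face.
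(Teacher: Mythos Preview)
Your proposal is correct and follows essentially the same route as the paper's proof: both apply Moffatt's Euler-genus formula (Theorem~\ref{d}) termwise and partition the sum over $A\subseteq E(G+e_{1})$ according to which of $e,e_{1}$ lie in $A$, with the ``both/neither'' cases reassembling $~^{\partial}{\Eu}^{*}_{G}(z)$ via the digon face count (your observation~(i), the paper's ``deleting the multiple edge $e_{1}$ decreases the number of faces by $1$'') and the mixed cases yielding $2z^{2}\,^{\partial}{\Eu}^{*}_{G/e}(z)$ via the fact that contracting a non-loop ribbon preserves the face count. The only differences are cosmetic: you list four cases where the paper lists three (merging your (C) and (D) by symmetry), and you route the mixed cases through observation~(ii) before contracting, whereas the paper contracts $e$ and $e_{1}$ directly in $A$ and $A^{c}$; these are the same computation.
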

\begin{figure}[h]
  \begin{minipage}[t]{0.6\textwidth}
  \centering
  \includegraphics[width=1\textwidth]{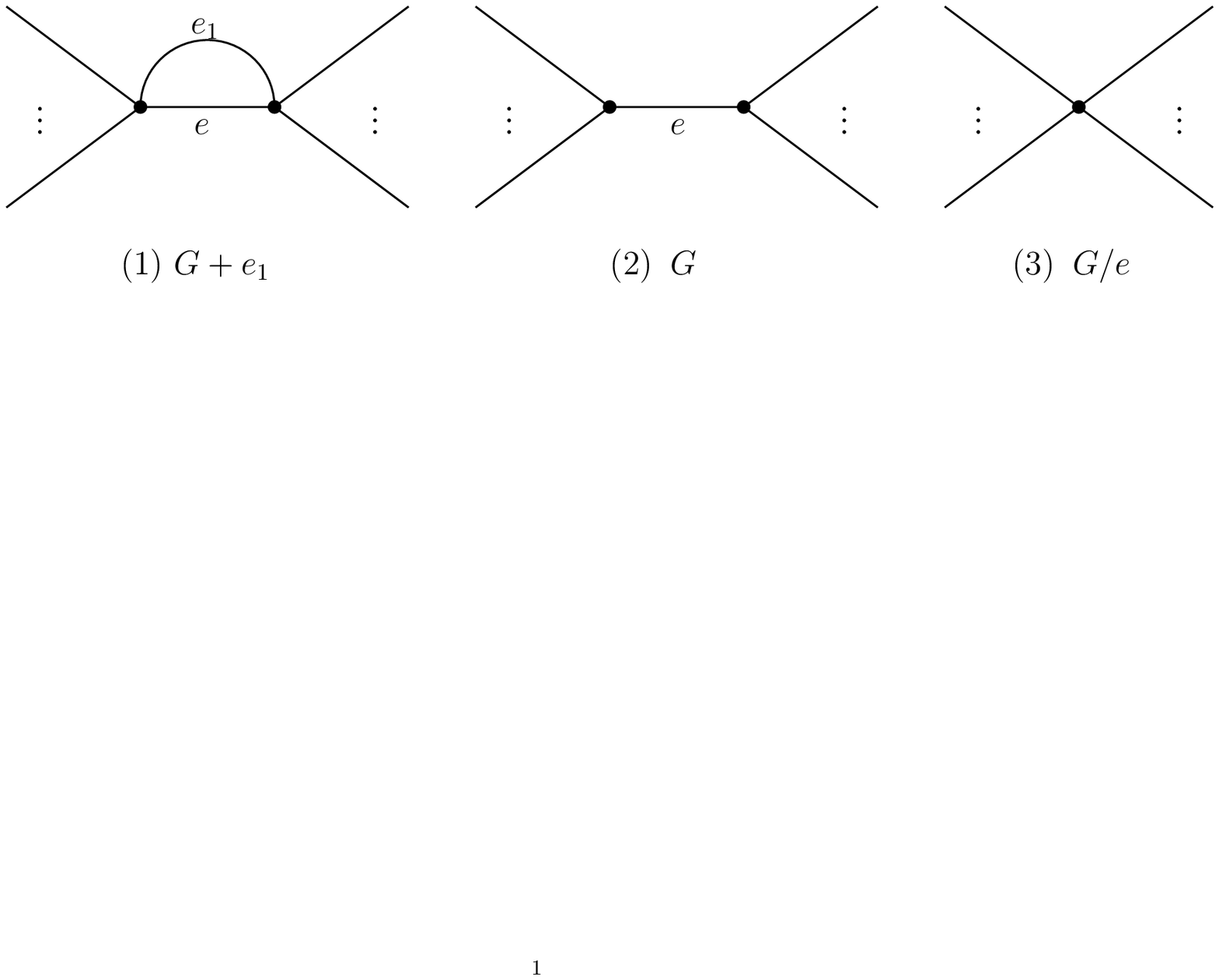}

  \end{minipage}

  \caption{}
   \label{flm}
  \end{figure}

\begin{proof}

Let $A\subseteq E(G+e_{1}),\ A_{1}\subseteq E(G), \  A_{2}\subseteq E(G/ e).$ It is easy to see that $e(G+e_{1})=e(G/ e)+2=e(G)+1.$     There are three cases.

$\mathbf{Case~ 1}:$ Suppose that $e_{1}\in A$ and $e\in A,$ and let $A_{1}=A-e_{1}$.   Since the ribbon graph $A_{1}$ is  obtained  from  $A$ by  deleting the multiple edge $e_{1}$, which decreases the number of faces by 1,  we have
\begin{eqnarray}\label{A1}
f(A)=f(A_{1})+1.
\end{eqnarray}  We use the fact that  the complement of $A$ in  $E(G+e_{1})$ is equal to the complement of $A_{1}$ in $E(G),$ then
\begin{eqnarray}\label{A2}
f(A^{c})=f(A_{1}^{c}).
\end{eqnarray}
 Thus, by  Theorem \ref{d},
\begin{eqnarray}\label{dm1}
\eu((G+e_{1})^{*|_{ A}})&=&2c(G+e_{1})+e(G+e_{1})-f(A)-f(A^{c})\notag\\
&=&2c(G)+e(G)+1-f(A_{1})-1-f(A_{1}^{c})\quad \text{by (\ref{A1})~and~(\ref{A2}) }\notag\\
&=&\eu(G^{*|_{ A_{1}}}).
\end{eqnarray}

$\mathbf{Case ~2}:$
Assume that $e_{1},e\in A^{c},$  and  let $A=A_{1}$.  Obviously,
\begin{eqnarray}\label{A3}
f(A)=f(A_{1}).
\end{eqnarray}
Because  $A^{c}_{1}$  is  obtained  from  $A^{c}$ by  deleting the multiple edge $e_{1}$, which decreases the number of faces by 1. Therefore,
\begin{eqnarray}\label{A4}
f(A^{c})=f(A^{c}_{1})+1.
\end{eqnarray}   Hence
 \begin{eqnarray}\label{dm2}
\eu((G+e_{1})^{*|_{ A}})&=&2c(G+e_{1})+e(G+e_{1})-f(A)-f(A^{c})\quad\text{by (\ref{d0}) }\notag\\
&=&2c(G)+e(G)+1-f(A_{1})-f(A_{1}^{c})-1\quad\text{by (\ref{A3})~and~(\ref{A4}) }\notag\\
&=&\eu(G^{*|_{ A_{1}}}).
\end{eqnarray}
 Let $\mathscr{A}=\{A|e_{1},e\in A\}\cup \{A|e_{1},e\in A^{c}\},$ $\mathscr{A}_{1}=\{A_{1}|e\in A_{1}\},$ $\mathscr{A}_{2}=\{A_{1}|e\in A^{c}_{1}\},$ then
 \begin{eqnarray}\label{dm3}
\sum\limits_{A\in\mathscr{A}}z^{\eu\big((G+e_{1})^{*|_A}\big)} &=&\sum\limits_{A_{1}\in \mathscr{A}_{1}} z^{\eu\big(G^{*|_{ A_{1}}}\big)}+\sum\limits_{A_{1}\in \mathscr{A}_{2}} z^{\eu\big(G^{*|_{ A_{1}}}\big)}\quad\text{by (\ref{dm1})~and~(\ref{dm2}) }\notag\\
&=&~^{\partial}{\Eu}^{*}_{G}(z).
\end{eqnarray}

$\mathbf{Case ~3}:$
Let $e_{1}\in A^{c},$ $e\in A$, and we let $A_{2}=A/e$.  Recall that $ A_{2}$ is  obtained  from $ A $ by contracting the proper edge  $e$ and  $A^{c}_{2}$  is  obtained  from  $A^{c}$  by contracting the proper edge  $e_{1}$.  Since the contraction  does not change the number of faces, we have
\begin{eqnarray}\label{A5}
f(A)=f(A_{2}), \quad f(A^{c})=f(A_{2}^{c}).
\end{eqnarray}
    Therefore,
\begin{eqnarray}\label{dm4}
\eu((G+e_{1})^{*|_{ A}})&=&2c(G+e_{1})+e(G+e_{1})-f(A)-f(A^{c})\quad\text{by (\ref{d0}) }\notag\\
&=&2c(G/e)+e(G/e)+2-f(A_{2})-f(A_{2}^{c})\quad\text{by (\ref{A5})  }\notag\\
&=&\eu((G/e)^{*| _{A_{2}}})+2.
\end{eqnarray}

 Similarly, formula (\ref{dm4}) also holds for the case: $e\in A^{c},$ and $e_{1}\in A.$

 Let  $\overline{\mathscr{A}}=\{A|e_{1}\in A^{c}, e\in A\}\cup \{A|e\in A^{c}, e_{1}\in A\},$ and  then
 \begin{eqnarray}\label{dm5}
\sum\limits_{A\in \overline{\mathscr{A}}}z^{\eu\big((G+e_{1})^{*|_A}\big)}&=&2z^{2} ~^{\partial}{\Eu}^{*}_{G/e}(z)  \quad\text{by (\ref{dm4}) }
\end{eqnarray}
Combining  cases 1-3, we obtain
$$
\begin {aligned}
 ~^{\partial}{\Eu}^{*}_{G+e_{1}}(z)&=\sum\limits_{A\in\mathscr{A}} z^{\eu[(G+e_{1})^{*|_A}]}+\sum\limits_{A\in\overline{\mathscr{A}}} z^{\eu[(G+e_{1})^{*|_A}]}\\
 &=2z^{2}~^{\partial}{\Eu}^{*}_{G/ e}(z)+~^{\partial}{\Eu}^{*}_{G}(z).\\
\end{aligned}
$$

\end{proof}

\begin{figure}[h]
  \centering
  \includegraphics[width=0.5\textwidth]{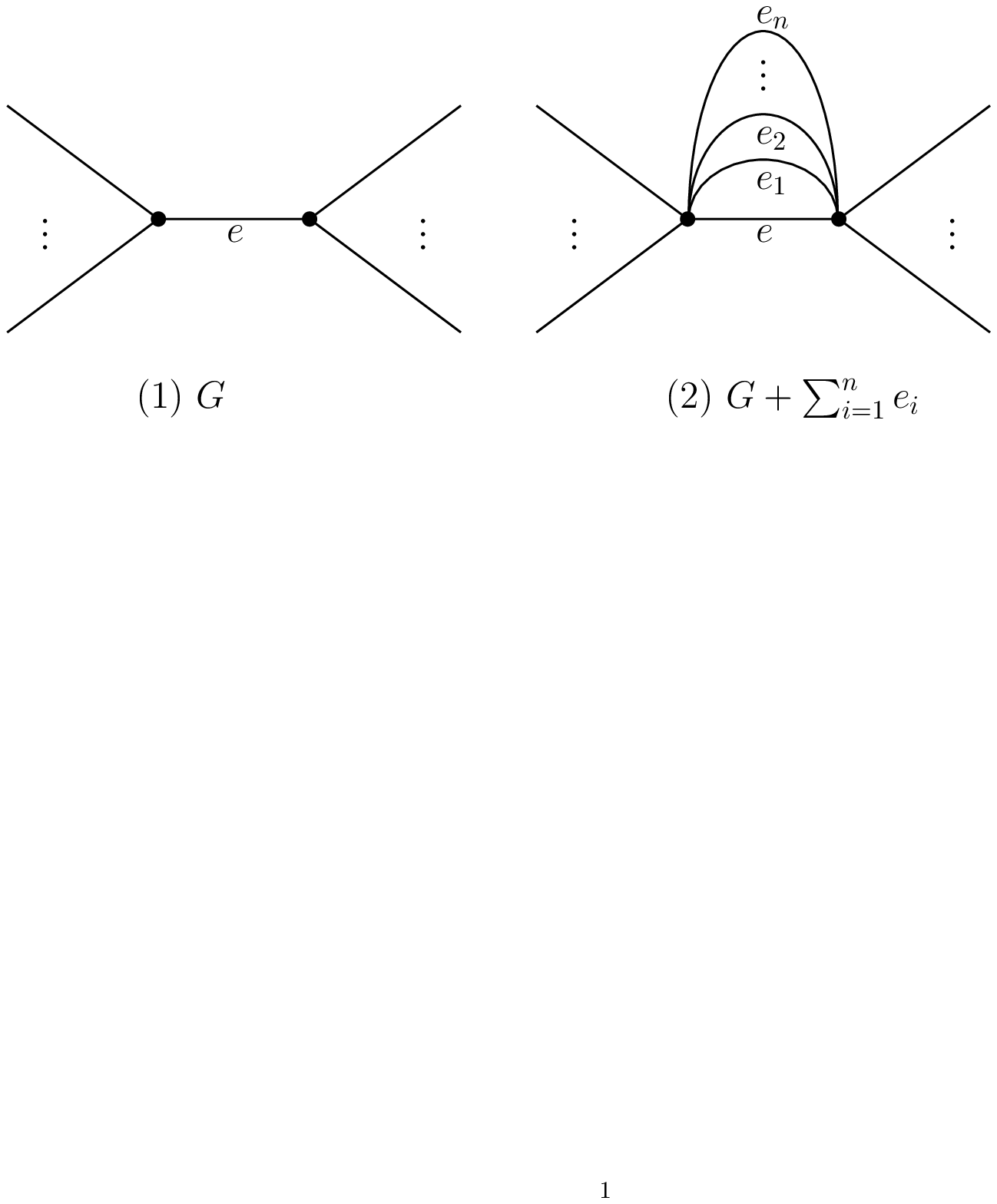}
   \caption{ }
 \label{dm0522}
\end{figure}
\begin{cor}\label{dmt}
Let $G$ be a ribbon graph with a  proper ribbon $e$, and let $G+\sum ^{n}_{i=1}e_{i}$ be the ribbon graph obtained by adding $n$ parallel ribbons $e_1$, $e_2$, $\cdots$,  $e_n$ to the ribbon $e$  as shown in Figure \ref{dm0522}.  Then
\begin{eqnarray}\label{dm01}
~^{\partial}{\Eu}^{*}_{G+ \sum ^{n}_{i=1}e_{i}}(z)&=&~^{\partial}{\Eu}^{*}_{G}(z)
+(2^{n+1}-2)z^{2}~^{\partial}{\Eu}^{*}_{G/ e}(z).
\end{eqnarray}
\end{cor}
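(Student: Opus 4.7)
The plan is to imitate the three-case analysis used in the proof of Theorem~\ref{1011} (the $n=1$ case), now generalized to a parallel bundle of $n+1$ ribbons.

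The key observation is that for any $A\subseteq E(G+\sum_{i=1}^{n} e_{i})$, the Euler genus $\eu\big((G+\sum_{i=1}^{n} e_{i})^{*|_{A}}\big)$ depends only on $A' := A\cap(E(G)\setminus\{e\})$ and on the integer $k := |A\cap B_0|$, where $B_0=\{e,e_1,\dots,e_n\}$, because the ribbons of $B_0$ are mutually interchangeable as parallel edges between the two endpoints of $e$. I would therefore parametrize subsets $A$ by the pair $(A',k)$, with multiplicity $\binom{n+1}{k}$ for each fixed pair.

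Next I would compute $\eu$ in each range of $k$ via Theorem~\ref{d}, generalizing the calculations of Cases 1--3 in Theorem~\ref{1011}. The cases $k=0$ and $k=n+1$ correspond to all of $B_0$ lying entirely in $A^c$ or entirely in $A$; there, each parallel ribbon beyond the first on that side adds exactly one bigon face, precisely absorbing the extra $+n$ in $e(G+\sum_{i=1}^{n} e_{i})-e(G)$, and one obtains $\eu\big((G+\sum_{i=1}^{n} e_{i})^{*|_{A}}\big)=\eu(G^{*|_{A_0}})$ with $A_0=A'$ or $A'\cup\{e\}$ accordingly. Summed over all $A'$, these two subcases contribute exactly $~^{\partial}{\Eu}^{*}_{G}(z)$. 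For $1\le k\le n$, both $A$ and $A^c$ contain at least one ribbon of $B_0$; using that contraction of the proper ribbon $e$ preserves face counts, Case 3 of Theorem~\ref{1011} extends to give $\eu\big((G+\sum_{i=1}^{n} e_{i})^{*|_{A}}\big)=\eu\big((G/e)^{*|_{A'}}\big)+2$.

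The final bookkeeping reduces to a binomial identity: the middle range $1\le k\le n$ contributes
\[
z^{2}\Bigl(\sum_{k=1}^{n}\binom{n+1}{k}\Bigr)\,~^{\partial}{\Eu}^{*}_{G/e}(z)=(2^{n+1}-2)\,z^{2}\,~^{\partial}{\Eu}^{*}_{G/e}(z),
\]
which, combined with the $k\in\{0,n+1\}$ contribution $~^{\partial}{\Eu}^{*}_{G}(z)$, gives the corollary.

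The main obstacle is the careful face-count bookkeeping: one must check that each additional ribbon of $B_0$ placed into $A$ (respectively $A^c$) adds exactly one bigon face once that side already contains a ribbon of $B_0$. This is a direct repetition of the arguments at formulas~(\ref{A1}) and~(\ref{A4}) in the proof of Theorem~\ref{1011}, iterated $k-1$ and $n-k$ times. A natural alternative is induction on $n$ via a single application of Theorem~\ref{1011}; however, this is hampered by the fact that $(G+\sum_{i=1}^{n-1} e_{i})/e$ equals $G/e$ with $n-1$ loops attached at the contracted vertex, so one would then need a separate lemma showing these loops multiply $~^{\partial}{\Eu}^{*}_{G/e}(z)$ by $2^{n-1}$, which is essentially equivalent to the direct calculation sketched above.
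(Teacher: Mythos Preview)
Your proof is correct, but it takes a different route from the paper's. The paper proceeds by induction on $n$: it applies Theorem~\ref{1011} once to peel off $e_n$, obtaining
\[
~^{\partial}{\Eu}^{*}_{G+\sum_{i=1}^{n}e_i}(z)=~^{\partial}{\Eu}^{*}_{G+\sum_{i=1}^{n-1}e_i}(z)+2z^{2}\,~^{\partial}{\Eu}^{*}_{(G+\sum_{i=1}^{n-1}e_i)/e_{n-1}}(z),
\]
and then observes that $(G+\sum_{i=1}^{n-1}e_i)/e_{n-1}$ is $G/e$ with $n-1$ trivial loops attached. At this point the paper simply cites Proposition~3.2 of \cite{GMT20}, which says each such loop multiplies the partial-$*$ polynomial by $2$, giving $~^{\partial}{\Eu}^{*}_{(G+\sum_{i=1}^{n-1}e_i)/e_{n-1}}(z)=2^{n-1}\,~^{\partial}{\Eu}^{*}_{G/e}(z)$, and the recursion closes immediately. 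This is precisely the ``natural alternative'' you describe in your last paragraph and dismiss as needing a separate lemma---but that lemma is already in the literature, so the paper's proof is only a few lines. Your direct approach, parametrizing by $(A',k)$ and summing the binomial coefficients, is self-contained and conceptually transparent (it shows exactly where the $2^{n+1}-2$ comes from), at the cost of redoing the face-count bookkeeping from scratch. Both are valid; the paper's is shorter because it leans on an existing result.
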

\begin{proof}
By Theorem \ref{1011},
\begin{eqnarray}\label{dm6}
~^{\partial}{\Eu}^{*}_{G+ \sum ^{n}_{i=1}e_{i}}(z)&=&~^{\partial}{\Eu}^{*}_{G+ \sum ^{n-1}_{i=1}e_{i}}(z)
+2z^{2}~^{\partial}{\Eu}^{*}_{(G+ \sum ^{n-1}_{i=1}e_{i})/ e_{n-1}}(z).
\end{eqnarray}
 Recall that the ribbons $e$, $e_1$,  $e_2$, $\cdots$, $e_{n-2}$  in $(G+ \sum ^{n-1}_{i=1}e_{i})/ e_{n-1}$ are all loops, and the ribbon graph
$G/ e$ is obtained from $(G+ \sum ^{n-1}_{i=1}e_{i})/ e_{n-1}$ by deleting $n-1$ loops. Thus,
from Proposition 3.2 in  \cite{GMT20},
\begin{eqnarray}\label{dm7}
~^{\partial}{\Eu}^{*}_{(G+ \sum ^{n-1}_{i=1}e_{i})/ e_{n-1}}(z)&=&2^{n-1}~^{\partial}{\Eu}^{*}_{G/ e}(z).
\end{eqnarray}
(\ref{dm6}), (\ref{dm7}) and (\ref{dm0}) clearly imply (\ref{dm01}).

\end{proof}

Recall that the  Tutte polynomial (\cite{Tut54}) and Bollobas-Riordan polynomial (\cite{Bol02})  have  deletion-contraction  recursions, as well as the role of series-parallel graphs in the foundations of matroids(\cite{Bry71}).

\begin{rem}As pointed out by the anonymous referee, subdividing an edge is adding a parallel edge in the dual ribbon graph.
Thus Theorem \ref{1011} can be seen as the dual form of Theorem \ref{GMT20}.
\end{rem}

Let $C_{n}$ and $K_{n}$ denote a $n$-cycle and a complete graph with $n$ vertices, respectively.  A graph is \textit{series parallel} (\cite{AVJ99}), if its 2-connected components can be generated by repeatedly  adding a parallel edge and subdividing an edge, starting with $K_{2}$. As an example, we apply Theorem \ref{GMT20} and  Theorem \ref{1011} to calculate the partial -$*$ polynomial for a series parallel graph.

\begin{figure}[h]
  \centering
  \includegraphics[width=0.6\textwidth]{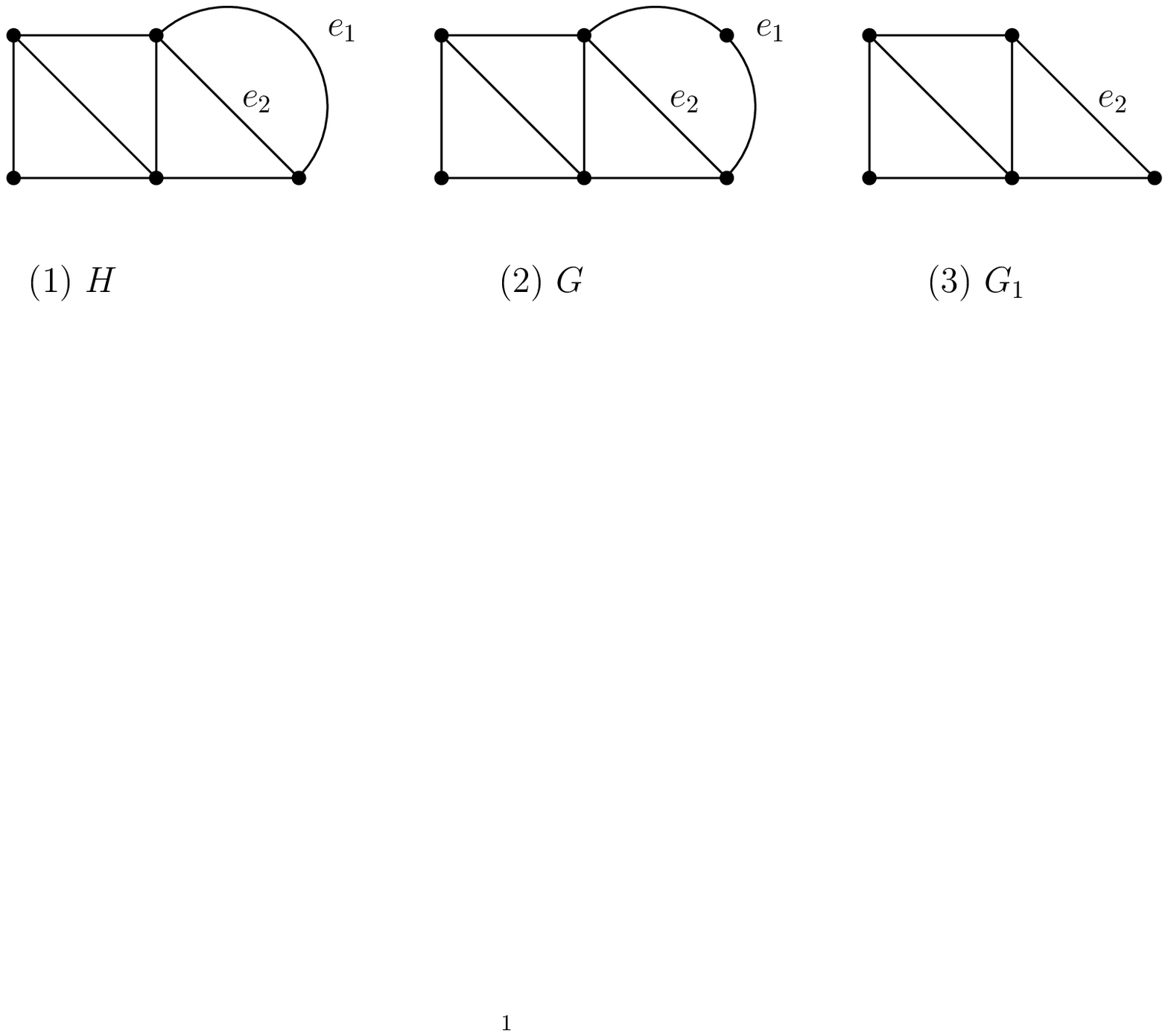}
   \caption{ }
 \label{pn1027}
\end{figure}
\begin{figure}[h]
  \centering
  \includegraphics[width=0.6\textwidth]{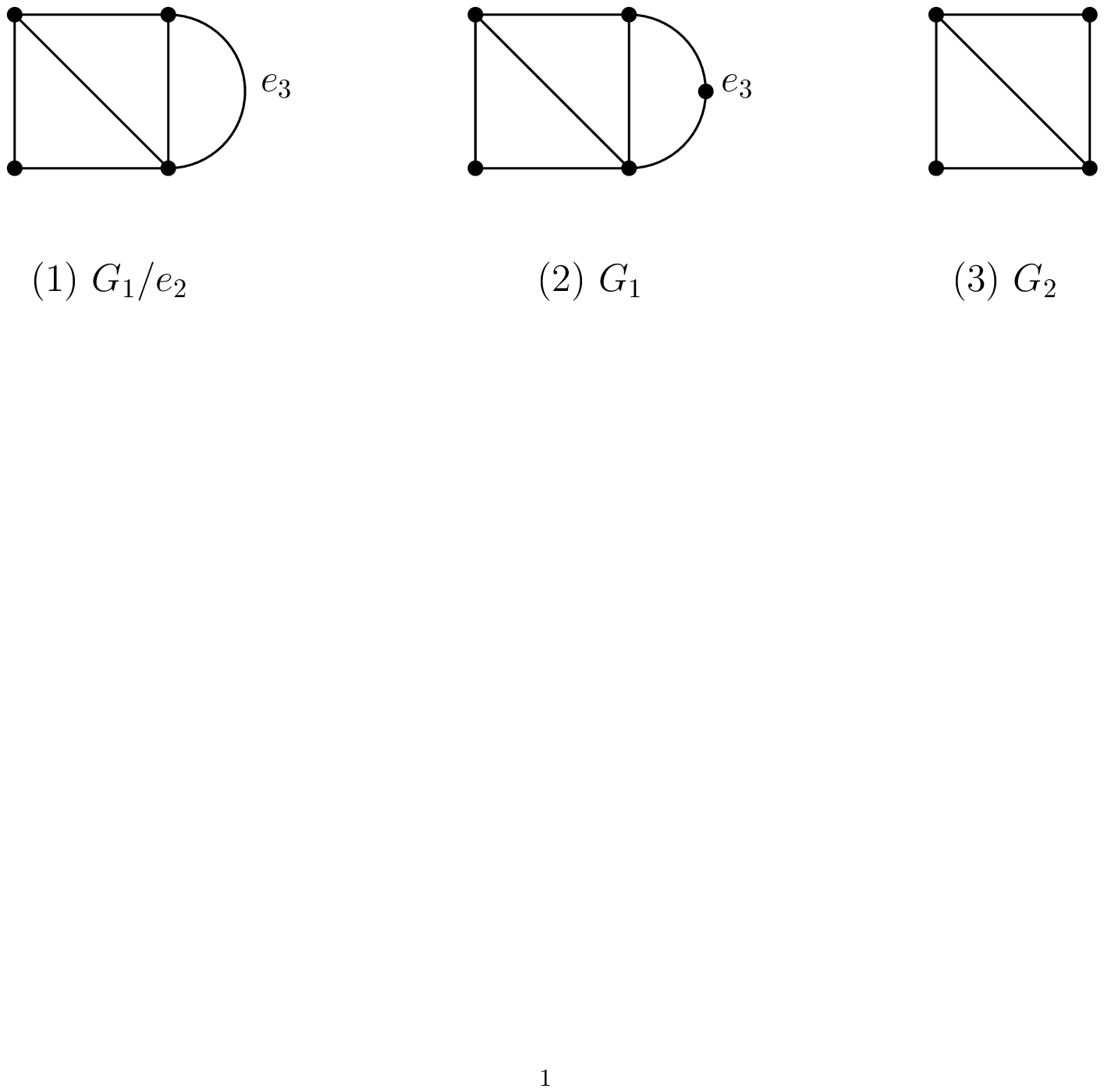}
   \caption{ }
 \label{pn10271}
\end{figure}
\begin{example}
  Figure \ref{pn1027} shows the  series parallel graphs $H, G$ and $G_1.$ 
It's clear that $G$ is isomorphic to $H$ with ribbon $e_1$ subdivided once, and  $G_{1}=H-e_{1}$. 
Moreover, $H$ is obtained from $G_1$ by adding a parallel edge $e_{1}$ to the ribbon $e_2$, hence, we have
\begin{eqnarray}\label{2400}
~^{\partial}{\Eu}^{*}_{G}(z)&=&~^{\partial}{\Eu}^{*}_{H}(z)+2z^{2}~^{\partial}{\Eu}^{*}_{G_{1}}(z)
\quad\text{by~(\ref{1027})} \notag\\
&=&(2z^{2}+1)~^{\partial}{\Eu}^{*}_{G_{1}}(z)+2z^{2}~^{\partial}{\Eu}^{*}_{G_{1}/e_{2}}(z)\quad\text{by ~ (\ref{dm0})}
\end{eqnarray}
Since $G_1$ is isomorphic to $G_{1}/e_{2}$ with ribbon $e_3$ subdivided once, let $G_{2}=(G_{1}/e_{2})-e_{3}$ as in Figure \ref{pn10271}, it follows that
\begin{eqnarray}\label{2401}
~^{\partial}{\Eu}^{*}_{G_{1}}(z)
&=&~^{\partial}{\Eu}^{*}_{G_{1}/e_{2}}(z)+2z^{2}~^{\partial}{\Eu}^{*}_{G_{2}}(z)\quad\text{by ~ (\ref{1027})}
\end{eqnarray}
Combining (\ref{2400}) and (\ref{2401}), we have
\begin{eqnarray}\label{2402}
~^{\partial}{\Eu}^{*}_{G}(z)&=& (4z^{2}+1)~^{\partial}{\Eu}^{*}_{G_{1}}(z)-4z^{4}~^{\partial}{\Eu}^{*}_{G_{2}}(z).
\end{eqnarray}
Similarly, we have
\begin{eqnarray}\label{2403}
~^{\partial}{\Eu}^{*}_{G_{1}}(z)&=& (4z^{2}+1)~^{\partial}{\Eu}^{*}_{G_{2}}(z)-4z^{4}~^{\partial}{\Eu}^{*}_{K_{3}}(z)\quad\text{by ~ (\ref{2402})},
\end{eqnarray}
and
\begin{eqnarray}\label{2404}
~^{\partial}{\Eu}^{*}_{G_{2}}(z)&=& (4z^{2}+1)~^{\partial}{\Eu}^{*}_{K_{3}}(z)-4z^{4}~^{\partial}{\Eu}^{*}_{K_{2}}(z)\quad\text{by ~ (\ref{2402})},
\end{eqnarray}
where  $K_{3}$ is obtained by adding a parallel edge  and subdividing an edge to $K_{2}$ once. It's obvious that $~^{\partial}{\Eu}^{*}_{K_{3}}(z)=6z^{2}+2$, and $~^{\partial}{\Eu}^{*}_{K_{2}}(z)=2.$
Therefore,
combining (\ref{2402}), (\ref{2403}) and (\ref{2404}),  we have
\begin{eqnarray*}\label{2405}
~^{\partial}{\Eu}^{*}_{G}(z)&=& (32z^{6}+40z^{4}+12z^{2}+1)~^{\partial}{\Eu}^{*}_{K_{3}}(z)-(48z^{8}+332z^{6}+4z^{4})~^{\partial}{\Eu}^{*}_{K_{2}}(z)
\notag\\
&=& 96z^{8}+240z^{6}+144z^{4}+30z^{2}+2.
\end{eqnarray*}

\end{example}

\section{Some counterexamples to the
conjecture 1.1.}

{  In this section, we find some infinite families of ribbon graphs as  counterexamples to  Conjecture 1.1.  }
\begin{figure}[h]
  \centering
  \includegraphics[width=0.7\textwidth]{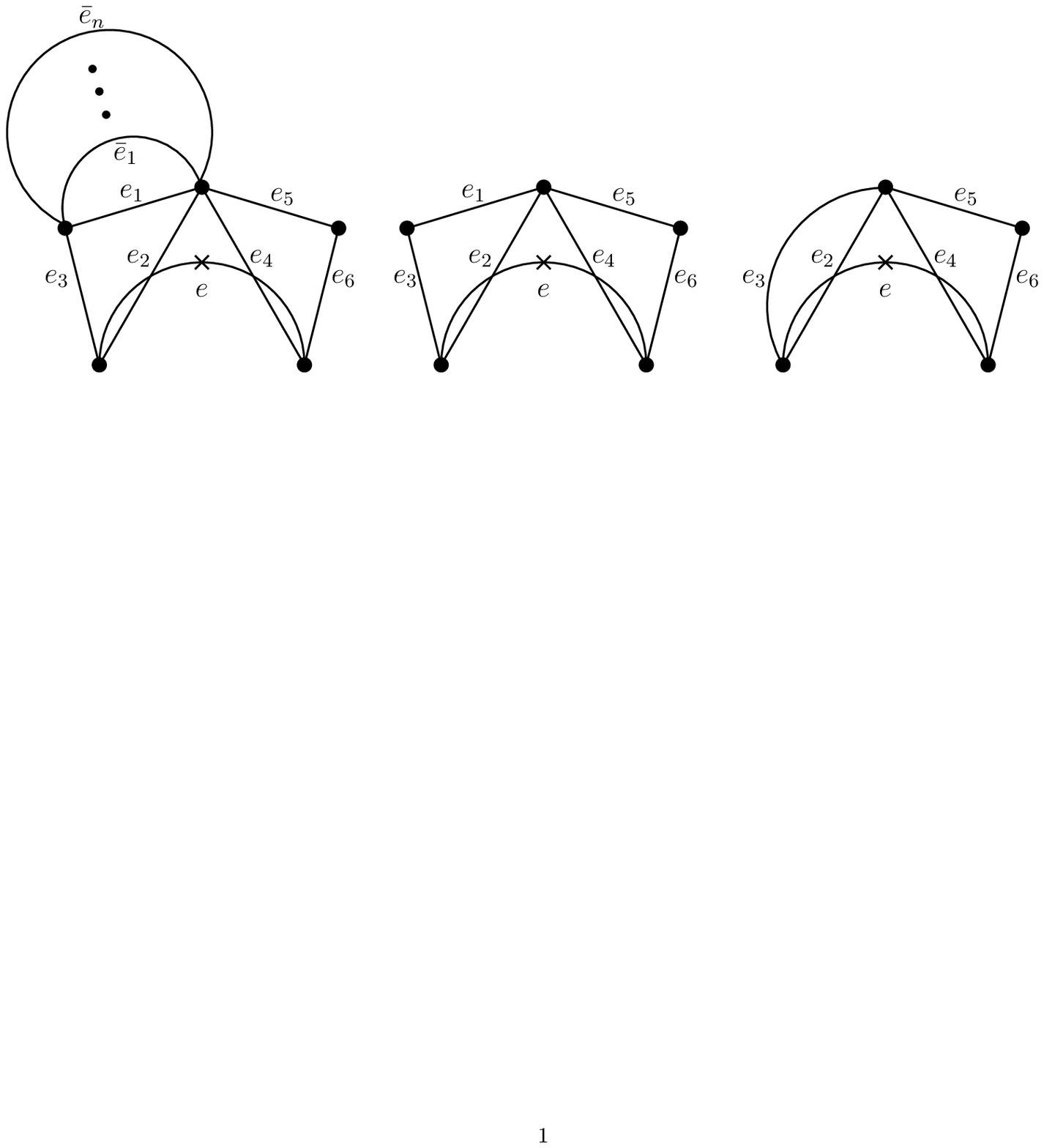}
 \caption{$G+\sum ^{n}_{i=1}\bar{e}_{i}$ (left), $G$ (middle) and $G/e_{1}$ (right) }
 \label{m0525}
\end{figure}

  Let $G$ and $G+\sum ^{n}_{i=1}\bar{e}_{i}$ be the ribbon graphs of Figure \ref{m0525}.
 Clearly,  $e(A)+e(A^{c})=e(G)$ and $\eu(G^{*|_{ A}})=\eu(G^{*|_{ A^{c}}})$. { Note that $e(G)=7$, and for each edge set with the number of edges greater than 3 in $G$, we can find the complement  with the number of edges less than 4.}

  \begin{enumerate}
    \item
   $G$
  has partial-$*$ polynomial $8z^{2}+48z^{4}+32z^{5}+40z^{6}$. By the previous analysis, we need to show the partial-$*$ polynomial of dualizing edge set with the number of edges less than 4 is $4z^{2}+24z^{4}+16z^{5}+20z^{6}$.
  The
$z^{2}$ term corresponds to dualizing none, or twisted edge $e$,  or all edges in the
same $C_{3}$ (2 choices) and all are isomorphic to $G$. The $z^{4}$
terms comes from dualizing one
edge in the same $C_{3}$ (6 choices),  or a pair in the same $C_{3}$ (6 choices),  or one edge in the same $C_{3}$ and $e$  (6 choices),  or a pair in the same $C_{3}$ and $e$ (6 choices). Dualizing  one edge of $C_{3}$ and  two edges of the other $C_{3}$ produces $z^{5}$ and $z^{6}$ (18 choices). Where dualizing $e_{1}$, $e_{3}$, and  one edge of $C_{3}$ ($e_{4}$, $e_{5}$,  $e_{6}$ ) (3 choices), or $e_{5}$, $e_{6}$, and  one edge of $C_{3}$ ($e_{1}$, $e_{2}$,  $e_{3}$ ) (3 choices)  produces $z^{5}$,  the remaining 12 choices produces $z^{6}$.  Dualizing  one edge in each $C_{3}$ and $e$ produces $z^{5}$ and $z^{6}$ (9 choices). Where dualizing $e_{2}$, $e_{4}$, and  $e$   produces $z^{5}$,  the remaining 8 choices produces $z^{6}$. Dualizing  one edge in each $C_{3}$ (9 choices) also produces $z^{5}$.
    \item  $ G/e_{1}$ has partial-$*$ polynomial $8z^{6}+16z^{5}+32z^{4}+8z^{2}.$ For each edge set with the number of edges greater than 3 in $G/e_{1},$ we can find the complement with the number of edges less than 3. Moreover, for the edge set whose number of edges is 3, the edge set with twisted edge $e$ is the complement of the edge set without $e$. Therefore, we need to show the partial-$*$ polynomial of dualizing edge set with the number of edges not greater than 2, and  edge set without $e$ where the number of edges is 3 are $4z^{6}+8z^{5}+16z^{4}+4z^{2}$.
  The $z^{2}$
terms comes from dualizing none, or $e$,   or $C_{3}$, or $C_{2}$ and all are isomorphic to $G/e_{1}$. The $z^{4}$
terms comes from dualizing one edge in $C_{3}$ and $C_{2}$   (5 choices),   or a pair in  $C_{3}$ (3 choices), or $e$ and one edge in $C_{3}$ and $C_{2}$ (5 choices),
or all edges in  $C_{2}$ and one edge in $C_{3}$ (3 choices). Dualizing  one edge of $C_{3}$ and  one edge of $C_{2}$  produces $z^{5}$ and $z^{6}$ (6 choices). Where dualizing $e_{3}$, $e_{5}$, $e_{6}$, and  $e_{2}$, $e_{5}$, $e_{6}$ produces $z^{5}$,  the remaining 4 choices produces $z^{6}$. Dualizing  one edge in  $C_{3}$ and one edge in $C_{2}$  (6 choices) also produces $z^{5}$.

  \end{enumerate}

   Thus by Corollary \ref{dmt}, we have
 \begin{eqnarray*}
~^{\partial}{\Eu}^{*}_{G+ \sum ^{n}_{i=1}\bar{e}_{i}}(z)&=&~^{\partial}{\Eu}^{*}_{G}(z)
+(2^{n+1}-2)z^{2}~^{\partial}{\Eu}^{*}_{G/ e_{1}}(z)\notag\\
&=&(2^{n+4}-16)z^{8}+(2^{n+5}-32)z^{7}+(2^{n+6}-24)z^{6}\\ &&+ 32z^{5}+(2^{n+4}+32)z^{4}+8z^{2}.
\end{eqnarray*}

It is quite obvious that the polynomial above is neither an odd nor an
even polynomial,  it's also not interpolating.

 \begin{rem}The anonymous referee also pointed out that we can take lots of joins of $G$ with itself to get counterexamples, since $$(z^{6}+z^{5}+z^{4}+z^{2})^{n}=z^{2n}+nz^{2n+2}+nz^{2n+3}+\cdots.$$
\end{rem}


\section{ A solution  to the  Problem 1.2.}
{{ In this section, we discuss the effect
of adding multiple edges on the restricted orientable
 partial-$\times$ polynomial. The \textit{restricted orientable partial-$\times$ ribbon graph} of $G$ is the orientable ribbon graph $G^{\times|_{A}}$.
For short, we
 denote the restricted orientable partial-$\times$ polynomial of ribbon graph $G$ by  $~^{\partial}{\Eu}^{\times}_{G}|_O(z)$.}}

 \begin{example}\label{dnn}
{Let $D_{n}$ be the dipole  ribbon graph in the sphere.  Clearly, if
one twists any proper subset of the edges, the resulting ribbon graph is non orientable, there will be circuits of lenght two containing only one twisted
edge. Thus the only orientable partial -$\times$ duals are $D_{n}$ and $D_{n}^{\times}$. The latter
has $1$ or $2$ faces depending on whether $n$ is odd or even and hence has
orientable genus $\frac{n-1}{2}$ or $\frac{n-2}{2}$. Furthermore, we have $~^{\partial}{\Eu}^{\times}_{D_{n}}|_O(z)=1+z^{n-1}$, when $n$ is odd;
$~^{\partial}{\Eu}^{\times}_{D_{n}}|_O(z)=1+z^{n-2}$, when $n$ is even.
Thus,   when $n\geq 5,$  the restricted-orientable partial-$\times$ polynomial of $D_{n}$ is not even-interpolating.}

\end{example}

\begin{lemma}\label{ori}
 Let $G$ be a  ribbon graph, then
$~^{\partial}{\Eu}^{\times}_{G}|_O(z)=~^{\partial}{\Eu}^{\times}_{G^{\times|_{A}}}|_O(z).$
\end{lemma}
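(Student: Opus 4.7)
The plan is to exploit the involutive, group-like structure of partial Petrialization. The key ingredient is the composition law
$$(G^{\times|_A})^{\times|_B} \;=\; G^{\times|_{A\,\triangle\, B}}$$
for any $A, B\subseteq E(G)$, where $\triangle$ denotes symmetric difference. Equivalently, the assignments $A\mapsto G^{\times|_A}$ realize an action of the group $(2^{E(G)},\triangle)$ on the collection of ribbon graphs with underlying edge set $E(G)$, and in particular each $\times|_A$ is an involution that fixes $E(\,\cdot\,)$ as a set.

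First, I would invoke (or, if necessary, verify) this composition law. The Petrial operation on a single ribbon reglues one pair of opposite arcs after a half-twist; performing it twice on the same ribbon restores the original gluing, and the single-edge Petrials on distinct ribbons clearly commute. These two facts together yield the composition law, which is also implicit in the treatment of twualities in \cite{EM12, AE19}.

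Second, since $E(G^{\times|_A}) = E(G)$ and $B\mapsto A\,\triangle\, B$ is a bijection of $2^{E(G)}$, I substitute into the definition of the restricted orientable partial-$\times$ polynomial and reindex by $C = A\,\triangle\, B$:
\begin{align*}
~^{\partial}{\Eu}^{\times}_{G^{\times|_A}}|_O(z)
&= \sum_{\substack{B\subseteq E(G)\\ (G^{\times|_A})^{\times|_B}\,\text{orientable}}} z^{\eu[(G^{\times|_A})^{\times|_B}]} \\
&= \sum_{\substack{C\subseteq E(G)\\ G^{\times|_C}\,\text{orientable}}} z^{\eu[G^{\times|_C}]}
\;=\;~^{\partial}{\Eu}^{\times}_G|_O(z),
\end{align*}
where the middle equality uses the composition law. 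Note that orientability is a property of the resulting ribbon graph, not of which subset produced it, so the condition on the summation index transforms correctly under the reindexing and the exponents match term by term.

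The only real obstacle is the justification of the composition law, which is essentially the elementary observation that a single-edge Petrial is an involution and that edge-wise Petrials on distinct edges commute. Once that is in hand, the lemma reduces to a change-of-variable argument with no case analysis; in particular, no appeal to Theorem \ref{d} or to the recursions of Section 2 is needed.
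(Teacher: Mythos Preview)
Your proof is correct and follows essentially the same approach as the paper's own argument: both rely on the fact that the collection of partial-$\times$ duals of $G^{\times|_A}$ coincides, as a multiset of ribbon graphs, with the collection of partial-$\times$ duals of $G$. The paper's proof is a one-line sketch that merely asserts the existence of the appropriate $B$ for each $A_1$, whereas you make the underlying composition law $(G^{\times|_A})^{\times|_B}=G^{\times|_{A\triangle B}}$ and the bijection $B\mapsto A\triangle B$ explicit, so your version is the more complete of the two.
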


\begin{proof}
Let $A_{1}\subseteq E(G)$, and $A\subseteq E(G)$.
For each  orientable ribbon graph $G^{\times|_{A_{1}}}$,  there exists  $B\subseteq E(G^{\times|_{A}})$ such that the  orientable ribbon graph
   $(G^{\times|_{A}})^{\times|_{B}}$  and $G^{\times|_{A_{1}}}$  are the same.

\end{proof}

 Lemma \ref{ori} shows that   it suffices to consider the case that  $G$ is an orientable ribbon graph.

\begin{lemma}\label{num}
Let $G$ be a  ribbon graph, then the number of the restricted orientable partial-$\times$ ribbon graphs of $G$ is $2^{v(G)-1}$.
\end{lemma}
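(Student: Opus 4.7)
The plan is to invoke Lemma \ref{ori} to reduce to the case where $G$ is an orientable ribbon graph, then characterise which subsets $A\subseteq E(G)$ yield an orientable $G^{\times|_A}$, and finally count them. If $B\subseteq E(G)$ is the set of twisted ribbons of $G$, then $G^{\times|_B}$ is orientable, and Lemma \ref{ori} says $G$ and $G^{\times|_B}$ produce the same collection of restricted orientable partial-$\times$ ribbon graphs. So we may assume $G$ is orientable from the start. The formula $2^{v(G)-1}$ implicitly assumes $G$ is connected, since for general $G$ with $c(G)$ connected components the same argument produces $2^{v(G)-c(G)}$.

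Fix a consistent orientation of the orientable $G$, equivalently a $2$-colouring $c_{0}:V(G)\to\{0,1\}$ such that every edge of $G$ joins same-coloured vertices. For $S\subseteq V(G)$, write $\partial S=\{uv\in E(G):|\{u,v\}\cap S|=1\}$ for the associated edge cut. The key claim is that $G^{\times|_{A}}$ is orientable if and only if $A=\partial S$ for some $S\subseteq V(G)$. Indeed, $G^{\times|_{A}}$ is orientable iff there is a $2$-colouring $c:V(G)\to\{0,1\}$ matching its twist pattern, i.e.\ $c(u)\ne c(v)$ iff $uv\in A$; setting $S=\{v:c(v)\ne c_{0}(v)\}$ then gives $A=\partial S$, and conversely every $S$ yields such a $c$ by flipping $c_{0}$ on $S$.

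To finish, observe that the map $S\mapsto\partial S$ is exactly two-to-one when $G$ is connected: $\partial S=\partial S'$ iff $S\triangle S'$ is a union of connected components of $G$, which for connected $G$ forces $S'\in\{S,V(G)\setminus S\}$. Among the $2^{v(G)}$ vertex subsets this yields $2^{v(G)-1}$ distinct edge cuts, and by the key claim there are exactly $2^{v(G)-1}$ subsets $A\subseteq E(G)$ for which $G^{\times|_{A}}$ is orientable. The main point requiring care is the equivalence between orientability of a ribbon graph and the existence of a compatible $2$-colouring (equivalently, the twist profile being a $\mathbb{Z}/2$ coboundary), which is standard but should be spelled out in whatever ribbon-graph model is used; once that is fixed, the counting reduces to the elementary fact that the coboundary space of a connected graph over $\mathbb{Z}/2$ has cardinality $2^{v(G)-1}$.
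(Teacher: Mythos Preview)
Your proof is correct and takes a genuinely different route from the paper's. The paper's argument is essentially a one-line citation: it quotes from \cite{GMT21a} that the proportion of partial-$\times$ duals of $G$ that are orientable equals $2^{-\beta(G)}$, and then computes $2^{e(G)}/2^{\beta(G)}=2^{v(G)-1}$ (implicitly taking $c(G)=1$, just as you note). Your argument, by contrast, is self-contained: after reducing to orientable $G$ via Lemma~\ref{ori}, you identify the subsets $A$ for which $G^{\times|_A}$ is orientable as precisely the edge cuts $\partial S$, and then count these as $2^{v(G)-1}$ using the two-to-one map $S\mapsto\partial S$ on a connected graph. In effect you are reproving the cited proportion result from first principles, exhibiting the orientable partial Petrials as a coset of the $\mathbb{Z}/2$-coboundary space of the underlying graph. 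The paper's approach is shorter because it outsources the substance; yours explains \emph{why} the answer is $2^{v(G)-1}$ and would stand even without access to \cite{GMT21a}. One minor point of care: your set-theoretic formula $|\{u,v\}\cap S|=1$ for $\partial S$ should be read with multiplicity at loops (a loop is never in $\partial S$), and the equivalence between orientability and the existence of a compatible vertex $2$-colouring deserves one more sentence of justification in whatever ribbon-graph model you are using; with those small clarifications the argument is complete.
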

\begin{proof}
By \cite{GMT21a}, we know  that  the proportion of partial-$\times$ duals of G
that are orientable is $\frac{1}{2^{\beta(G)}}$
.  There are $2^{e(G)}$ spanning subgraphs in $G$, thus, the number of the restricted orientable partial-$\times$ ribbon graphs of $G$ is
\begin{eqnarray*}
\frac{ 2^{e(G)}}{2^{\beta(G)}} &=& \frac{2^{e(G)}}{2^{e(G)-v(G)+1}}
=2^{v(G)-1}.\label{2vG-1}
\end{eqnarray*}

\end{proof}

Let $e$ be a ribbon of $G$, and let $A$ be a subset of $E(G)$,  we  define  $\eu^{0}(G^{\times|_{A}})$ and $\eu^{1}(G^{\times|_{A}})$ as $e\in A$ and  $e\notin A$ of the Euler-genus of orientable ribbon graph $G^{\times|_{A}}$, respectively. Similarly,
 we let  $f^{0}(G^{\times|_{A}})$ and $f^{1}(G^{\times|_{A}})$, respectively,  denote the number of     faces of  orientable ribbon graph $G^{\times|_{A}}$ with $e\in A$ and  $e\notin A$. And let
 $ f^{0}_{max}(G)=max\{f^{0}(G^{\times|_{A}})| e\in A\}$,
 $ f^{1}_{min}(G)=min\{f^{1}(G^{\times|_{A}})| e\notin A\}$.

\begin{thm}\label{tl}
Let $G$ be a ribbon graph with a  proper ribbon $e$, and for even $n$, let $G+\sum ^{n}_{i=1}e_{i}$ be the ribbon graph obtained by adding $n$  parallel ribbons $e_1$, $e_2$, $\cdots$,  $e_n$ to the ribbon $e$  (see Figure \ref{dm0522}). Then the polynomial $~^{\partial}{\Eu}^{\times}_{G+\sum ^{n}_{i=1}e_{i}}|_O(z)$ is not  even-interpolating for   sufficiently large $n$.

\end{thm}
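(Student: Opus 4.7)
The plan is to split the restricted orientable partial-$\times$ polynomial according to the twist status of the parallel bundle $B := \{e, e_1, \ldots, e_n\}$, and then to show that for large even $n$ the two halves occupy ranges of Euler-genera separated by a gap. For any pair $e_i, e_j \in B$ the two edges form a $2$-cycle in $G + \sum_{i=1}^n e_i$, and any cycle with an odd number of twisted edges is non-orientable, so $(G + \sum e_i)^{\times|_A}$ orientable forces $A \cap B \in \{\emptyset, B\}$. Accordingly one writes
\[
{}^{\partial}\Eu^{\times}_{G+\sum e_i}|_O(z) \= P_A(z) \+ P_B(z),
\]
where $P_A$ collects the $A \subseteq E(G) \ssm \{e\}$ (Case A) and $P_B$ collects the $A = A' \cup B$ with $A' \subseteq E(G) \ssm \{e\}$ (Case B). Tracing cycles that pass through $B$ reduces the orientability of the full dual in Case A to orientability of $G^{\times|_A}$ and in Case B to orientability of $G^{\times|_{A' \cup \{e\}}}$.

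Next I would compute the Euler-genera. In Case A the $n$ added parallel edges are untwisted; placing them cyclically adjacent to $e$ makes each one split its incident face, so $e(\cdot)$ and $f(\cdot)$ each grow by $n$ and $\eu((G + \sum e_i)^{\times|_A}) = \eu(G^{\times|_A})$. In Case B the parallel bundle becomes locally the twisted dipole $D_{n+1}^{\times}$; since $n$ is even (so $n+1$ is odd), Example \ref{dnn} gives a single face boundary and local Euler-genus contribution $n$. A direct boundary-walk comparison---matching the lone face arc around an isolated $D_{n+1}^{\times}$ with the lone face arc that crosses the single twisted ribbon $e$ of $G^{\times|_{A'\cup\{e\}}}$---yields
\[
\eu\big((G + \textstyle\sum e_i)^{\times|_{A' \cup B}}\big) \= \eu\big(G^{\times|_{A' \cup \{e\}}}\big) \+ n.
\]

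Set $M := \max \eu(G^{\times|_A})$ over orientable Case A duals and $m := \min \eu(G^{\times|_{A' \cup \{e\}}})$ over orientable Case B duals; both are constants depending only on $G$ and can be rewritten in terms of $f^{1}_{\min}(G)$ and $f^{0}_{\max}(G)$ through $\eu = 2c(G) - v(G) + e(G) - f$. Then $supp(P_A) \subseteq [0, M]$ and $supp(P_B) \subseteq [n + m,\infty)$, and both are non-empty: $A = \emptyset$ contributes to $P_A$ after replacing $G$ by an orientable representative via Lemma \ref{ori}, and twisting the full edge-star of an endpoint of $e$ together with $e$ itself contributes to $P_B$. For even $n \geq M - m + 4$ the exponent gap between the two supports exceeds $2$, and since all exponents are even (orientable Euler-genera are even), at least one even integer in the open interval $(M, n+m)$ fails to lie in $supp({}^{\partial}\Eu^{\times}_{G+\sum e_i}|_O)$, so this polynomial is not even-interpolating. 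The main obstacle is the Case B Euler-genus identity: one must verify that replacing the twisted ribbon $e$ in $G^{\times|_{A' \cup \{e\}}}$ by the twisted dipole $D_{n+1}^{\times}$ with $n+1$ odd preserves the ambient face count---hence raises the Euler-genus by exactly $n$---which I would justify by explicit boundary-walk tracing or by recognising the local modification as an Euler-genus-additive plumbing with the isolated $D_{n+1}^{\times}$.
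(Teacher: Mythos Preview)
Your proposal is correct and follows the same core strategy as the paper: observe that orientability forces $A\cap B\in\{\emptyset,B\}$, show that in the untwisted case the Euler-genus is unchanged while in the fully-twisted case it grows by exactly~$n$ (since $n$ is even), and conclude that for large $n$ the resulting support cannot be an interval of even integers.

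The only noteworthy difference is in the final contradiction. You argue a \emph{gap}: $supp(P_A)\subseteq[0,M]$ and $supp(P_B)\subseteq[n+m,\infty)$, so once $n\geq M-m+4$ some even integer between $M$ and $n+m$ is missing. The paper instead uses a \emph{counting} argument: it invokes Lemma~\ref{num} to bound the total number of orientable partial-$\times$ duals by $2^{v(G)-1}$, and then observes that an even-interpolating support containing both a Case~A value and a Case~B value would need at least $n/2+1$ elements, which exceeds $2^{v(G)-1}$ for large $n$. Your gap argument is a bit more elementary in that it avoids Lemma~\ref{num}; the paper's counting argument avoids having to identify $M$ and $m$ explicitly. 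Both reach the same threshold qualitatively.

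One remark on what you flag as ``the main obstacle'': the Case~B identity $\eu=(+n)$ is simpler than the dipole/plumbing picture you sketch. The paper just notes that deleting an even number of mutually parallel twisted ribbons from $(G+\sum e_i)^{\times|_{A'\cup B}}$ leaves the face count unchanged, so $f$ is preserved while $e$ drops by~$n$, giving $\eu$ up by~$n$ directly from Euler's formula. No boundary-walk tracing is needed.
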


\begin{proof}
 Suppose that $~^{\partial}{\Eu}^{\times}_{G+\sum ^{n}_{i=1}e_{i}}|_O(z)$  is even-interpolating. We  need  to analysis the number of     faces for orientable ribbon graphs $G^{\times|_{A}}$ and $(G+\sum ^{n}_{i=1}e_{i})^{\times|_{A'}}$.

 If $e\notin A$,  and $G^{\times|_{A}}$ is an orientable ribbon graph, then $f(G^{\times|_{A}})=f^{1}(G^{\times|_{A}})$.  We let $A'=A$, thus $(G+\sum ^{n}_{i=1}e_{i})^{\times|_{A'}}$ is also an orientable ribbon graph,  and
 \begin{eqnarray}
f((G+\sum ^{n}_{i=1}e_{i})^{\times|_{A'}})&=&f^{1}((G+\sum ^{n}_{i=1}e_{i})^{\times|_{A'}})\notag\\
 &=&f^{1}(G^{\times|_{A}})+n,\label{A.0}
\end{eqnarray}
   because $G^{\times|_{A}}$ is  obtained  from $ (G+\sum ^{n}_{i=1}e_{i})^{\times|_{A'}} $ by deleting  $n$ multiple ribbons $e_{1}$, $e_{2}$, $\cdots$,  $e_{n}$, and deleting a multiple ribbon will reduce one face.

Otherwise, put $e\in A_{0}$,  suppose that the ribbon graph $G^{\times|_{A_{0}}}$ is an orientable  ribbon graph, then  $f(G^{\times|_{A_{0}}})=f^{0}(G^{\times|_{A_{0}}})$.   Due to $e_i \cup e$ is a 2-cycle in $G+\sum ^{n}_{i=1}e_{i}$,  let $A'_{0}=A_{0}\cup\sum ^{n}_{i=1}e_{i}$,   thus $ (G+\sum ^{n}_{i=1}e_{i})^{\times|_{A'_{0}}}$ is also an orientable ribbon graph.  Since the ribbon graph $G^{\times|_{A_{0}}}$ is  obtained  from  $(G+\sum ^{n}_{i=1}e_{i})^{\times|_{A'_{0}}}$  by deleting $n$ twisted multiple ribbons  $e_{1}$, $e_{2}$, $\cdots$,  $e_{n}$, and the deletion of the even number of  twisted ribbons   does not change the number of faces, it follows that
 \begin{eqnarray}
f((G+\sum ^{n}_{i=1}e_{i})^{\times|_{A'_{0}}}) &=&f^{0}((G+\sum ^{n}_{i=1}e_{i})^{\times|_{A'_{0}}})\notag\\
&=& f^{0}(G^{\times|_{A_{0}}}).\label{A.1}
\end{eqnarray}

According to the relationship between $f^{1}(G^{\times|_{A}})$ and $f^{0}(G^{\times|_{A_{0}}}) $, we have the following two cases.

$\mathbf{Case~ 1}:$
 If $f^{1}(G^{\times|_{A}})\geq f^{0}(G^{\times|_{A_{0}}}) $.    By Euler's formula,
  \begin{eqnarray}
   \eu^{0}(G^{\times|_{A_{0}}})&=&2c(G)+e(G)-v(G)- f^{0}(G^{\times|_{A_{0}}})\notag\\
   &\geq& 2c(G)+e(G)-v(G)- f^{1}(G^{\times|_{A}})\notag\\
    &=& \eu^{1}(G^{\times|_{A}}),\label{A.3}
    \end{eqnarray}
    \begin{eqnarray}
    \eu^{0}((G+\sum ^{n}_{i=1}e_{i})^{\times|_{A'_{0}}})
    &=&2c(G)+e(G)+n-v(G)- f^{0}(G^{\times|_{A_{0}}})\quad\text{by (\ref{A.1})}\notag\\
    &=&\eu^{0}(G^{\times|_{A_{0}}})+n,\label{A.5}
     \end{eqnarray}
   \begin{eqnarray}\eu^{1}((G+\sum ^{n}_{i=1}e_{i})^{\times|_{A'}})&=&2c(G)+e(G)+n-v(G)- f^{1}(G^{\times|_{A}})-n\quad\text{by (\ref{A.0})}\notag\\
   &=&\eu^{1}(G^{\times|_{A}}).\label{A.6} \end{eqnarray}

   Moreover,  by (\ref{A.3})-(\ref{A.6}), we get $\eu^{0}((G+\sum ^{n}_{i=1}e_{i})^{\times|_{A'_{0}}}) \geq \eu^{1}((G+\sum ^{n}_{i=1}e_{i})^{\times|_{A'}})$.  So by the hypothesis that $~^{\partial}{\Eu}^{\times}_{G+\sum ^{n}_{i=1}e_{i}}|_O(z)$  is even-interpolating,
   there is a subinterval
   $[\eu^{1}((G+\sum ^{n}_{i=1}e_{i})^{\times|_{A'}}),
   \eu^{0}((G+\sum ^{n}_{i=1}e_{i})^{\times|_{A'_{0}}})]$   in $supp(~^{\partial}{\Eu}^{\times}_{G+\sum ^{n}_{i=1}e_{i}}|_O(z))$. Since there are only even numbers in  $[\eu^{1}((G+\sum ^{n}_{i=1}e_{i})^{\times|_{A'}}),\eu^{0}((G+\sum ^{n}_{i=1}e_{i})^{\times|_{A'_{0}}})]$ (the Euler-genus of an orientable ribbon graph is even), it follows that the size of $[\eu^{1}((G+\sum ^{n}_{i=1}e_{i})^{\times|_{A'}}),
   \eu^{0}((G+\sum ^{n}_{i=1}e_{i})^{\times|_{A'_{0}}})]$ is \begin{eqnarray}\label{05291}
 \frac{\eu^{0}(G^{\times|_{A_{0}}})-\eu^{1}(G^{\times|_{A}})+n}{2}+1 \quad\text{by (\ref{A.5}) and (\ref{A.6}) }
      \end{eqnarray}

       By Lemma \ref{num},
  there are $2^{v(G)-1}$ orientable ribbon graphs   in $(G+\sum ^{n}_{i=1}e_{i})^{\times|_{A'}}$.  Even if the number of faces of  the restricted orientable partial-$\times$ ribbon graph of
  $(G+\sum ^{n}_{i=1}e_{i})$ are different, there are only $2^{v(G)-1}$ different numbers.
By (\ref{05291}), for  sufficiently large $n$,  we have
   $$\frac{\eu^{0}(G^{\times|_{A_{0}}})-\eu^{1}(G^{\times|_{A}})+n}{2}+1\geq 2^{v(G)-1}+1.$$
 Hence, contradicting the assumption that  $~^{\partial}{\Eu}^{\times}_{G+\sum ^{n}_{i=1}e_{i}}|_O(z)$  is   even-interpolating.
 We have proved the case~ 1.

$\mathbf{Case~ 2}:$ {If $f^{1}(G^{\times|_{A}})< f^{0}(G^{\times|_{A_{0}}}) $. By Euler's formula, we have $\eu^{0}(G^{\times|_{A_{0}}})<\eu^{1}(G^{\times|_{A}})$,
however, when $n$ is large enough, by (\ref{A.5})-(\ref{A.6}), we still obtain $\eu^{0}((G+\sum ^{n}_{i=1}e_{i})^{\times|_{A'_{0}}}) \geq \eu^{1}((G+\sum ^{n}_{i=1}e_{i})^{\times|_{A'}})$.
 The remainder of the proof is analogous to that in case~ 1 and so is omitted.}

\end{proof}

 Next, we give a   lower bound for    $~^{\partial}{\Eu}^{\times}_{G+\sum ^{n}_{i=1}e_{i}}|_O(z)$  is not  even-interpolating for the number $n$.
\begin{thm}\label{tm}
Let $G$ be a ribbon graph with a  proper ribbon $e$, and let $G+\sum ^{n}_{i=1}e_{i}$ be the ribbon graph obtained by adding $n$ parallel ribbons $e_1$, $e_2$, $\cdots$,  $e_n$ to the ribbon $e$ (see Figure \ref{dm0522}).
\begin{enumerate}
 \item If $~^{\partial}{\Eu}^{\times}_{G}|_O(z)$ has only one term, and \begin{equation*}
n\geq
\begin{cases}
 4,&\text{if $f^{0}_{max}(G +e_{1})= f^{0}(G^{\times|_{A}})+1$,  }\\
 3,&\text{if $f^{0}_{max}(G +e_{1})= f^{0}(G^{\times|_{A}})-1$,}\\

\end{cases}
\end{equation*} then  $~^{\partial}{\Eu}^{\times}_{G+\sum ^{n}_{i=1}e_{i}}|_O(z)$  is not  even-interpolating.
\item
    If $~^{\partial}{\Eu}^{\times}_{G}|_O(z)$ has more than one term, and $
    n\geq min\{f^{0}_{max}(G +e_{1})- f^{1}_{min}(G)+4, f^{0}_{max}(G)-f^{1}_{min}(G)+4\},
   $
     then $~^{\partial}{\Eu}^{\times}_{G+\sum ^{n}_{i=1}e_{i}}|_O(z)$  is not  even-interpolating.

\end{enumerate}

\end{thm}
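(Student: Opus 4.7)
The plan is to adapt the strategy of the proof of Theorem~\ref{tl}: classify which $A'\subseteq E(G+\sum_{i=1}^{n} e_{i})$ yield an orientable $(G+\sum_{i=1}^{n} e_{i})^{\times|_{A'}}$, compute the resulting Euler-genera in terms of face counts of orientable partial-$\times$ ribbon graphs of $G$ (and of $G+e_{1}$ when $n$ is odd), and exhibit a gap of size at least four between two achieved even exponents, which forces a missing even exponent and so rules out even-interpolation.

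The classification step is common to both parts. An orientable $(G+\sum_{i=1}^{n}e_{i})^{\times|_{A'}}$ must contain the parallel bundle $\{e,e_{1},\ldots,e_{n}\}$ either entirely inside $A'$ or entirely outside $A'$, since every $2$-cycle $\{e,e_{j}\}$ must itself be orientable and so $e$ and $e_{j}$ are forced to share twist status. This splits the analysis into family~(a), with the bundle untwisted (parametrised by an orientable $G^{\times|_{A}}$ with $e\notin A$), and family~(b), with the bundle twisted (parametrised by an orientable $G^{\times|_{A_{0}}}$ with $e\in A_{0}$). Using the face-count identities already exploited in the proof of Theorem~\ref{tl}, namely that adding an untwisted parallel ribbon adds exactly one face and that deleting two twisted parallel ribbons preserves the face count, family~(a) contributes Euler-genus $\eu^{1}(G^{\times|_{A}})$, while family~(b) contributes $\eu^{0}(G^{\times|_{A_{0}}})+n$ for even $n$, or $\eu^{0}((G+e_{1})^{\times|_{A_{0}\cup\{e_{1}\}}})+n-1$ for odd $n$, obtained by factoring the additional $n-1$ twisted parallel ribbons through $G+e_{1}$.

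For part~(1) the monomial hypothesis collapses every $\eu^{1}(G^{\times|_{A}})$ and every $\eu^{0}(G^{\times|_{A_{0}}})$ to a common value $k$. Family~(a) then contributes only $k$, and, for even $n$, family~(b) contributes only $k+n$; the assumption $n\geq 4$ makes $k+2$ a missing even exponent inside the interval $[k,k+n]$. For odd $n$ the sign condition on $f^{0}_{max}(G+e_{1})$ dictates whether family~(b) contributes $k+n-1$ or $k+n+1$, and the piecewise bounds $n\geq 4$ (when $f^{0}_{max}(G+e_{1})=f^{0}(G^{\times|_{A}})+1$) and $n\geq 3$ (when $f^{0}_{max}(G+e_{1})=f^{0}(G^{\times|_{A}})-1$) are calibrated so that the resulting gap still exceeds~$2$, once again leaving a missing even exponent.

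For part~(2) the support decomposes as the union of the set $\{\eu^{1}(G^{\times|_{A}})\}$ from family~(a), which is bounded above by $2c(G)-v(G)+e(G)-f^{1}_{min}(G)$, together with an $n$-shifted copy of $\{\eu^{0}(G^{\times|_{A_{0}}})\}$ or of $\{\eu^{0}((G+e_{1})^{\times|_{A_{0}\cup\{e_{1}\}}})\}$ from family~(b), which is bounded below by $2c(G)-v(G)+(e(G)+n)-f^{0}_{max}(G)$ for even $n$ and by $2c(G)-v(G)+(e(G)+n)-f^{0}_{max}(G+e_{1})$ for odd $n$. The difference between these two extremes is $n-(f^{0}_{max}(G)-f^{1}_{min}(G))$ in the even case and $n-(f^{0}_{max}(G+e_{1})-f^{1}_{min}(G))$ in the odd case, so the hypothesis $n\geq\min\{f^{0}_{max}(G+e_{1})-f^{1}_{min}(G)+4,\,f^{0}_{max}(G)-f^{1}_{min}(G)+4\}$ forces at least one of these gaps to be at least~$4$; since family~(a) lies entirely at or below the first extreme and family~(b) entirely at or above the second, an even exponent strictly between them is unavoidably absent, and the polynomial cannot be even-interpolating. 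The main obstacle is justifying the parallel-bundle classification rigorously via the $2$-cycle orientability obstruction and routing the odd-$n$ face-count bookkeeping through $G+e_{1}$; with these in place both parts reduce to comparing the hypothesised bound on $n$ with the length of the forced gap.
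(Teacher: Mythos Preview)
Your proposal is correct and follows essentially the same route as the paper: both arguments split the orientable partial-$\times$ duals of $G+\sum_{i=1}^{n}e_{i}$ into the ``bundle untwisted'' and ``bundle twisted'' families, compute the extremal Euler-genera of each family via face counts of $G$ (routing through $G+e_{1}$ when $n$ is odd), and then show the gap between $\eu^{1}_{max}$ and $\eu^{0}_{min}$ exceeds~$2$, leaving a missing even exponent. Your explicit justification of the bundle classification via the $2$-cycle orientability obstruction is cleaner than the paper's implicit treatment; the one step you (and the paper) should make explicit in part~(2) is that the two thresholds $f^{0}_{max}(G+e_{1})-f^{1}_{min}(G)+4$ and $f^{0}_{max}(G)-f^{1}_{min}(G)+4$ have opposite parities and differ by exactly~$1$, so $n\geq\min\{\cdot,\cdot\}$ automatically forces $n$ to meet whichever threshold matches its own parity, which is the one actually governing the gap.
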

\begin{proof}Suppose that $~^{\partial}{\Eu}^{\times}_{G+\sum ^{n}_{i=1}e_{i}}|_O(z)$  is even-interpolating. We
observe that the number of faces of $\eu^{0}_{min}(G +\sum ^{n}_{i=1}e_{i})$ is    $f^{0}_{max}(G +\sum ^{n}_{i=1}e_{i})$, and the number of faces of $\eu^{1}_{max}(G +\sum ^{n}_{i=1}e_{i})$ is    $f^{1}_{min}(G +\sum ^{n}_{i=1}e_{i})$. Let $e\in A_0$, $e\notin A_1$,
 assume that both $G^{\times|_{A_{0}}}$ and  $G^{\times|_{A_{1}}}$ are  orientable ribbon graphs, then $f(G^{\times|_{A_{0}}})=f^{0}(G^{\times|_{A_{0}}})$, $f(G^{\times|_{A_{1}}})=f^{1}(G^{\times|_{A_{1}}})$.

For Item (1), we have  \begin{eqnarray}
f^{1}(G^{\times|_{A_{1}}})= f^{0}(G^{\times|_{A_{0}}})= f^{1}_{min}(G)= f^{0}_{max}(G).\label{tm1} \end{eqnarray}
 Observe that
$
f^{0}_{max}(G +e_{1})=
 f^{0}_{max}(G)+1,$ or
$f^{0}_{max}(G +e_{1})=
 f^{0}_{max}(G)-1,$
which implies that we should consider the following two cases.

$\mathbf{Case~ 1}:$
 $f^{0}_{max}(G +e_{1})=f^{0}_{max}(G)+1$. Note that  deleting an even number of twisted multiple edges does not change the number of faces. Then,  when $n$ is even, the maximum value of $f^{0}(G +\sum ^{n}_{i=1}e_{i})$ is the maximum value of  $f^{0}(G )$, that is
 \begin{eqnarray}
f^{0}_{max}(G +\sum ^{n}_{i=1}e_{i})&=& f^{0}_{max}(G ). \label{tm88}\end{eqnarray}

When $n$ is odd, and deleting  $n-1$  twisted multiple edges does not change the number of faces.
 We have
\begin{eqnarray}
f^{0}_{max}(G +\sum ^{n}_{i=1}e_{i})&=& f^{0}_{max}(G +e_{1})\label{tm8}\\
&=& f^{0}_{max}(G)+1.\notag \end{eqnarray}

By  Euler's formula, it follows that
 \begin{eqnarray}
\eu^{0}_{min}(G +\sum ^{n}_{i=1}e_{i})&=& 2c(G)+e(G )+n-v(G )- f^{0}_{max}(G)\quad\text{by~(\ref{tm88})}\notag\\
&=& 2c(G)+e(G )+n-v(G )- f^{0}(G^{\times|_{A_{0}}})\quad\text{by~(\ref{tm1})} \notag\\
&=& \eu^{0}(G^{\times|_{A_{0}}})+n,\label{tm08}
 \end{eqnarray} for even $n$, and
 \begin{eqnarray}
\eu^{0}_{min}(G +\sum ^{n}_{i=1}e_{i})&=& 2c(G)+e(G )+n-v(G )- f^{0}_{max}(G)-1\quad\text{by~(\ref{tm8})}\notag\\
&=& 2c(G)+e(G )+n-v(G )- f^{0}(G^{\times|_{A_{0}}})-1\quad\text{by~(\ref{tm1})} \notag\\
&=& \eu^{0}(G^{\times|_{A_{0}}})+n-1,\label{tm03}
 \end{eqnarray} for odd $n$.

Let    $A'_{1}=A_{1}$. It is easy to see that $(G +\sum ^{n}_{i=1}e_{i})^{\times|_{A'_{1}}}$ is an orientable ribbon graph,
 and  $f((G +\sum ^{n}_{i=1}e_{i})^{\times|_{A'_{1}}})=f^{1}((G +\sum ^{n}_{i=1}e_{i})^{\times|_{A'_{1}}})$.
      Clearly, for any ribbon subset $A'_{1}$ of $G +\sum ^{n}_{i=1}e_{i}$, we have
$
f^{1}((G +\sum ^{n}_{i=1}e_{i})^{\times|_{A'_{1}}})=  f^{1}(G^{\times|_{A_{1}}})+n.
$
Furthermore, \begin{eqnarray}
f^{1}_{min}(G +\sum ^{n}_{i=1}e_{i})&=&f^{1}_{min}(G)+n,\label{tm111}
\end{eqnarray}

similarly, we have
\begin{eqnarray}
\eu^{1}_{max}(G +\sum ^{n}_{i=1}e_{i})&=& 2c(G)+e(G )+n-v(G )- f^{1}_{min}(G)-n\quad\text{by (\ref{tm111})}\notag\\
&=& 2c(G)+e(G )+n-v(G )- f^{0}(G^{\times|_{A_{0}}})-n \quad\text{by (\ref{tm1})}\notag\\
&=& \eu^{0}(G^{\times|_{A_{0}}}).\label{tm3}
 \end{eqnarray}
 Recall
that $~^{\partial}{\Eu}^{\times}_{G+\sum ^{n}_{i=1}e_{i}}|_O(z)$  is even-interpolating, so,
 there is a subinterval
   $[\eu^{1}_{max}(G +\sum ^{n}_{i=1}e_{i}),
   \eu^{0}_{min}(G +\sum ^{n}_{i=1}e_{i})]$   in $supp(~^{\partial}{\Eu}^{\times}_{G+\sum ^{n}_{i=1}e_{i}}|_O(z))$. Note that the Euler-genus of the orientable ribbon graph $(G +\sum ^{n}_{i=1}e_{i})^{\times|_A}$ is either smaller than $\eu^{1}_{max}(G +\sum ^{n}_{i=1}e_{i})$ or larger than $\eu^{0}_{min}(G +\sum ^{n}_{i=1}e_{i})$, so, there are only two numbers $\eu^{1}_{max}(G +\sum ^{n}_{i=1}e_{i})$,
    $ \eu^{0}_{min}(G +\sum ^{n}_{i=1}e_{i})$ in $[\eu^{1}_{max}(G +\sum ^{n}_{i=1}e_{i}),
    \eu^{0}_{min}(G +\sum ^{n}_{i=1}e_{i})]$.

  { If $n$ is odd,  from (\ref{tm03}) and (\ref{tm3}), the size of  $[\eu^{1}_{max}(G +\sum ^{n}_{i=1}e_{i}),
    \eu^{0}_{min}(G +\sum ^{n}_{i=1}e_{i})]$ is
      $\frac{\eu^{0}(G^{\times|_{A_{0}}})+n-1-\eu^{0}(G^{\times|_{A_{0}}})}{2}+1$.
 Otherwise,   from (\ref{tm08}) and (\ref{tm3}), the size of  $[\eu^{1}_{max}(G +\sum ^{n}_{i=1}e_{i}),
    \eu^{0}_{min}(G +\sum ^{n}_{i=1}e_{i})]$ is
 $\frac{\eu^{0}(G^{\times|_{A_{0}}})+n-\eu^{0}(G^{\times|_{A_{0}}})}{2}+1$. When the size of $[\eu^{1}_{max}(G +\sum ^{n}_{i=1}e_{i}),
    \eu^{0}_{min}(G +\sum ^{n}_{i=1}e_{i})]$ is greater than or equal to 3,   it is  contrary to the hypothesis that $~^{\partial}{\Eu}^{\times}_{G+\sum ^{n}_{i=1}e_{i}}|_O(z)$  is even-interpolating. Therefore,
    $~^{\partial}{\Eu}^{\times}_{G+\sum ^{n}_{i=1}e_{i}}|_O(z)$ is not  even-interpolating for
 $n\geq4$.}

$\mathbf{Case~ 2}:$  $f^{0}_{max}(G +e_{1})=f^{0}_{max}(G)-1$.  Similarly, when $n$ is odd, \begin{eqnarray}
f^{0}_{max}(G +\sum ^{n}_{i=1}e_{i})&=& f^{0}_{max}(G +e_{1})\label{tm9}\\
&=& f^{0}_{max}(G)-1.\notag \end{eqnarray}
\begin{eqnarray}
\eu^{0}_{min}(G +\sum ^{n}_{i=1}e_{i})&=& 2c(G)+e(G )+n-v(G )- f^{0}_{max}(G)+1\quad\text{by~(\ref{tm9})}\notag\\
&=& 2c(G)+e(G )+n-v(G )- f^{0}(G^{\times|_{A_{0}}})+1\quad\text{by~(\ref{tm1})} \notag\\
&=& \eu^{0}(G^{\times|_{A_{0}}})+n+1,\label{tm10}
 \end{eqnarray}
  {If $n$ is odd,  from (\ref{tm10}) and (\ref{tm3}),
 $\frac{\eu^{0}(G^{\times|_{A_{0}}})+n+1-\eu^{0}(G^{\times|_{A_{0}}})}{2}+1$ is the size of  $[\eu^{1}_{max}(G +\sum ^{n}_{i=1}e_{i}),
    \eu^{0}_{min}(G +\sum ^{n}_{i=1}e_{i})]$.
 The remainder of the argument is analogous to that in case 1 and omitted. Hence, $~^{\partial}{\Eu}^{\times}_{G+\sum ^{n}_{i=1}e_{i}}|_O(z)$ is not  even-interpolating  for $n\geq3$.}

For item (2),  we shall adopt the same procedure as in the proof of  case~1.

\noindent When $n$ is odd,
  by (\ref{tm8}),    we have $f^{0}_{max}(G +\sum ^{n}_{i=1}e_{i})= f^{0}_{max}(G+e_{1} )$, and \begin{eqnarray}\label{1121}
\eu^{0}_{min}(G +\sum ^{n}_{i=1}e_{i})&=& 2c(G+e_{1} )+e(G+e_{1} )+n-1-v(G+e_{1} )- f^{0}_{max}(G+e_{1} )\notag\\
&=& 2c(G+e_{1} )+e(G+e_{1} )-v(G+e_{1} )- f^{0}_{max}(G+e_{1} )+n-1\notag\\
&=& \eu^{0}_{min}(G +e_{1})+n-1.
 \end{eqnarray}
\noindent When $n$ is even, by (\ref{tm88}),    we have $f^{0}_{max}(G +\sum ^{n}_{i=1}e_{i})
= f_{max}^{0}(G)$, and
\begin{eqnarray}\label{1122}
\eu^{0}_{min}(G +\sum ^{n}_{i=1}e_{i})&=& 2c(G)+e(G)+n-v(G)- f^{0}_{max}(G)\notag\\
&=& 2c(G)+e(G)-v(G)- f^{0}_{max}(G)+n\notag\\
&=& \eu^{0}_{min}(G)+n.
 \end{eqnarray}
By (\ref{tm3}), we have
\begin{eqnarray}\label{1123}
\eu^{1}_{max}(G +\sum ^{n}_{i=1}e_{i})&=& 2c(G)+e(G)+n-v(G)- f^{0}_{min}(G)-n\notag\\
&=& 2c(G)+e(G)-v(G)- f^{0}_{min}(G)\notag\\
&=& \eu^{1}_{max}(G).
 \end{eqnarray}

Recall that there exists a subinterval $[\eu^{1}_{max}(G +\sum ^{n}_{i=1}e_{i}),
   \eu^{0}_{min}(G +\sum ^{n}_{i=1}e_{i})]$ in $supp(~^{\partial}{\Eu}^{\times}_{G+\sum ^{n}_{i=1}e_{i}}|_O(z))$. By (\ref{1121})-(\ref{1123}),
 there exist two subintervals  $[\eu^{1}_{max}(G),$ $\eu^{0}_{min}(G +e_{1})+n-1]$ and
   $[\eu^{1}_{max}(G),\eu^{0}_{min}(G)+n]$   in $supp(~^{\partial}{\Eu}^{\times}_{G+\sum ^{n}_{i=1}e_{i}}|_O(z))$. When

  \begin{equation}\label{1129}
 \frac{\eu^{0}_{min}(G )+n-\eu^{1}_{max}(G)}{2}+1\geq 3,
 \end{equation}
 or
   \begin{equation}\label{1124}
  \frac{\eu^{0}_{min}(G +e_{1})+n-1-\eu^{1}_{max}(G)}{2}+1\geq 3,
\end{equation}
{there are at least three numbers in $[\eu^{1}_{max}(G),\eu^{0}_{min}(G +e_{1})+n-1]$ or $[\eu^{1}_{max}(G),\\
\eu^{0}_{min}(G)+n]$ respectively.  However, there are only two numbers $\eu^{1}_{max}(G)$, $\eu^{0}_{min}(G +e_{1})+n-1$ in  $[\eu^{1}_{max}(G),
\eu^{0}_{min}(G +e_{1})+n-1]$ and two numbers $\eu^{1}_{max}(G)$, $\eu^{0}_{min}(G)+n$  in $[\eu^{1}_{max}(G),
\eu^{0}_{min}(G)+n]$,  it is impossible.

Combining (\ref{1121})-(\ref{1124}), we  finish the proof.}

\end{proof}

\begin{thm}\cite{GMT21a}\label{join}
Let $G=G_{1}\vee G_{2}$. Then
\begin{eqnarray*}
~^{\partial}{\Eu}^{\times}_{G}(z) &=& ~^{\partial}{\Eu}^{\times}_{G_{1}}(z) ~^{\partial}{\Eu}^{\times}_{G_{2}}(z).\label{sub1}
\end{eqnarray*}

\end{thm}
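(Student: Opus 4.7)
The plan is to exploit the fact that for the join $G = G_1 \vee G_2$, the edge set splits as a disjoint union $E(G) = E(G_1) \sqcup E(G_2)$, so every subset $A \subseteq E(G)$ decomposes uniquely as $A = A_1 \cup A_2$ with $A_i = A \cap E(G_i)$. With this partition, the defining sum for $~^{\partial}{\Eu}^{\times}_{G}(z)$ becomes a double sum over $A_1 \subseteq E(G_1)$ and $A_2 \subseteq E(G_2)$, and the whole theorem reduces to showing that the exponent $\eu[G^{\times|_A}]$ itself splits as $\eu[G_1^{\times|_{A_1}}] + \eu[G_2^{\times|_{A_2}}]$.

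To prove that splitting, I would first establish the structural identity
\[
(G_1 \vee G_2)^{\times|_A} \;=\; G_1^{\times|_{A_1}} \vee G_2^{\times|_{A_2}},
\]
which should follow from the edge-local nature of the Petrial: twisting a ribbon $e \in E(G_i)$ only modifies the attachment data at $e$, which lies entirely inside $G_i$ and does not interact with the merger of rotations at the identification vertex used to form the join. Next I would invoke additivity of Euler genus under joins, $\eu[H_1 \vee H_2] = \eu[H_1] + \eu[H_2]$. This follows from $\eu = 2c - v + e - f$ and the parameter counts $v(H_1 \vee H_2) = v(H_1) + v(H_2) - 1$, $c(H_1 \vee H_2) = c(H_1) + c(H_2) - 1$, $e(H_1 \vee H_2) = e(H_1) + e(H_2)$, and $f(H_1 \vee H_2) = f(H_1) + f(H_2) - 1$, where the unit drop in the face count captures the fact that exactly one face incident to the identified vertex on each side merges into a single face of the join.

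Combining these two identities turns the exponential sum into a product through $z^{a+b} = z^a z^b$, namely
\[
~^{\partial}{\Eu}^{\times}_{G}(z) \;=\; \sum_{A_1 \subseteq E(G_1)}\sum_{A_2 \subseteq E(G_2)} z^{\eu[G_1^{\times|_{A_1}}]+\eu[G_2^{\times|_{A_2}}]} \;=\; ~^{\partial}{\Eu}^{\times}_{G_1}(z)\cdot ~^{\partial}{\Eu}^{\times}_{G_2}(z).
\]
The main obstacle is the careful justification of the face-merger count $f = f_1 + f_2 - 1$ under arbitrary prior Petrialization: one must check that when the rotations at the identified vertex in $G_1^{\times|_{A_1}}$ and $G_2^{\times|_{A_2}}$ are merged, the boundary walks passing through that vertex still behave additively with exactly one amalgamation, regardless of which subset of edges was twisted on either side. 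Once this bookkeeping is secured, the factorization is immediate.
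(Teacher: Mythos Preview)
The paper does not actually prove this theorem: it is quoted from \cite{GMT21a} and stated without proof, so there is no ``paper's own proof'' to compare against. Your argument is the standard one and is correct: the edge set of a join splits, partial Petrial is edge-local so commutes with the join, and Euler genus is additive over joins via the parameter count you gave; the only delicate point is indeed the face identity $f(H_1\vee H_2)=f(H_1)+f(H_2)-1$, and your justification (exactly one boundary walk from each side is merged at the identified vertex, regardless of which ribbons were twisted) is the right reason.
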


\begin{rem}
{It is easy to see that Theorem \ref{sub} still holds for $~^{\partial}{\Eu}^{\times}_{G}|_O(z)$.
}
\end {rem}

Now, let's take two examples to illustrate the above results.

\begin{figure}[h]
  \centering
  \includegraphics[width=0.2\textwidth]{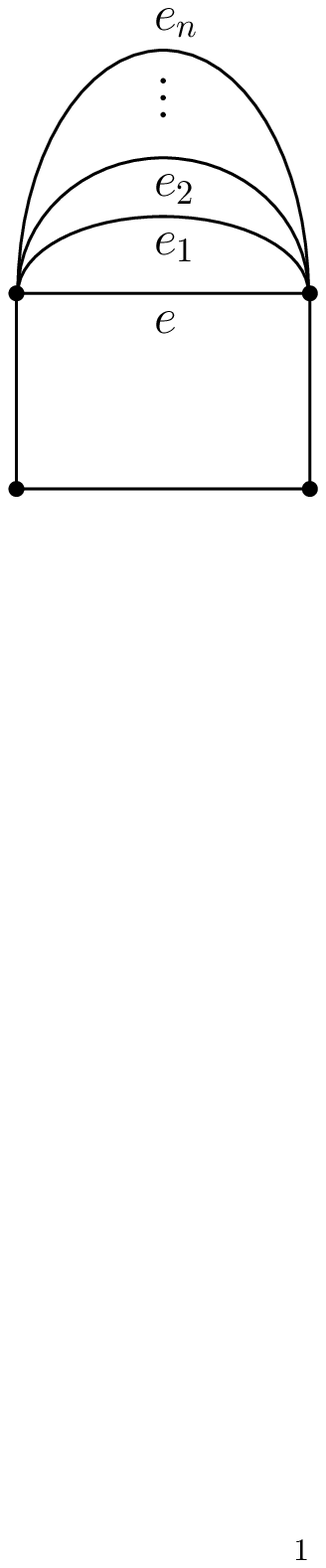}
 \caption{$C_4+\sum ^{n}_{i=1}e_{i}$ }
 \label{d4ei}
\end{figure}

\begin{example}{
Let $C_m$ be the planar ribbon $m$-cycle. Evidently,
$~^{\partial}{\Eu}^{\times}_{C_2}|_O(z)=2.$
Using the fact that  $C_m$ can be  obtained
from $C_2$ by   subdividing a ribbon   $m-2$ times, we have
\begin{eqnarray*}
~^{\partial}{\Eu}^{\times}_{C_m}|_O(z)  &=&2^{m-2}  ~^{\partial}{\Eu}^{\times}_{C_2}|_O(z)\quad \text{by Theorem \ref{sub}}\notag\\
&=&2^{m-1}.
\end{eqnarray*}
The ribbon graph $C_m+\sum ^{n}_{i=1}e_{i}$ is obtained from $C_m$ by attaching $n$ parallel  edges $e_1$, $e_2$, $\cdots$, $e_n$  to the  ribbon $e$, as shown in Figure \ref{d4ei}. It is clear that $f^{0}(C_m^{\times|_{A}})=2$. Since the number of faces of orientable ribbon graph $(C_m+e_1)^{\times|_{B}}$ with $e\in B$ is 1, it follows that
$f^{0}_{max}(C_m+e_1)=f^{0}(C_m^{\times|_{A}})-1=1$.  By  Theorem \ref{tm} (1), when $n\geq 3,$  the restricted-orientable partial-$\times$ polynomial of $C_m+\sum ^{n}_{i=1}e_{i}$ is not even-interpolating.}

Let $H=D_{n}\vee C_2$,  then by Theorem
  \ref{join}  and  Example \ref{dnn},
the restricted-orientable partial-$\times$ polynomial of  $H$ is not even-interpolating  for $n\geq 5$.

\end{example}

\begin{figure}[h]
  \centering
  \includegraphics[width=0.5\textwidth]{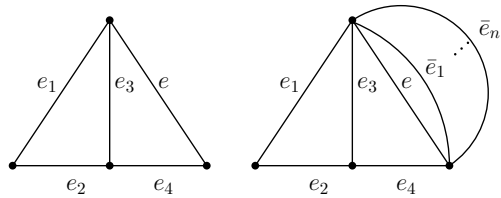}
 \caption{$G$ and $G+\sum ^{n}_{i=1}\bar{e}_{i}$ }
 \label{f111}
\end{figure}
\begin{example}
{Let $G$ and $G+\sum ^{n}_{i=1}\bar{e}_{i}$ be the ribbon graphs  in Figure  \ref{f111}. When $A\in \{\{e_{1},e_{3},e\},\{e_{2},e_{3},e\},\{e_{1},e_{3},e_{4}\},\{e_{2},e_{3},e_{4}\}\}$,
the number of faces of orientable ribbon graph $G^{\times|_{A}}$ is 1.
When $A\in \{\{e_{1},e_{2}\},\{e_{4},e\},\{e_{1},e_{2},e_{4},e\},\emptyset\}$,
the number of faces of orientable ribbon graph $G^{\times|_{A}}$ is 3.
We infer that
$~^{\partial}{\Eu}^{\times}_{G}|_O(z)=4+4z^{2},$  $f^{0}_{max}(G)=3$,} and
$f^{1}_{min}(G)=1$.  When $B\in \{\{e_{1},e_{3},e,\bar{e}_{1} \},\{e_{2},e_{3},e, \bar{e}_{1}\},$ $\{e_{1},e_{2},e_{4},e,  \bar{e}_{1}\},\{e_{4},e,  \bar{e}_{1}\}\}$,
we have $f((G+\bar{e}_{1})^{\times|_{B}})=2$, thus,
\begin{eqnarray*}
 f^{0}_{max}(G+\bar{e}_{1})  &=&2 ~<~ f^{0}_{max}(G).
\end{eqnarray*}
 Therefore, by  Theorem \ref{tm} (2),
 when $n \geq2-1+4= 5,$    the restricted-orientable partial-$\times$ polynomial of $G+\sum ^{n}_{i=1}\bar{e}_{i}$ is not even-interpolating.

\end{example}

\subsection{Acknowledgments}

We are grateful to the anonymous referees for their valuable comments.



\end{document}